\documentclass[letterpaper,journal]{IEEEtran}
\usepackage{amsmath,amsfonts}
\usepackage{algorithmic}
\usepackage{algorithm}
\usepackage{array}
\usepackage[caption=false,font=normalsize,labelfont=sf,textfont=sf]{subfig}
\usepackage{textcomp}
\usepackage{stfloats}
\usepackage{url}
\usepackage{verbatim}
\usepackage{graphicx}
\usepackage{cite}
\usepackage{threeparttable}
\newtheorem{theorem}{Theorem}
\newtheorem{remark}{Remark}

\newtheorem{lemma}{Lemma}
\newtheorem{definition}{Definition}

\newtheorem{conjecture}{Conjecture}

\newcommand{\R}{\mathbb{R}}
\newcommand{\C}{\mathbb{C}}

\newcommand{\ba}{\mathbf{a}}
\newcommand{\bb}{\mathbf{b}}
\newcommand{\bc}{\mathbf{c}}

\newcommand{\be}{\mathbf{e}}
\newcommand{\bg}{\mathbf{g}}

\newcommand{\bu}{\mathbf{u}}
\newcommand{\bv}{\mathbf{v}}

\newcommand{\bx}{\mathbf{x}}

\newcommand{\by}{\mathbf{y}}
\newcommand{\bh}{\mathbf{h}}

\newcommand{\bA}{\mathbf{A}}

\newcommand{\bD}{\mathbf{D}}

\newcommand{\bG}{\mathbf{G}}
\newcommand{\bH}{\mathbf{H}}

\newcommand{\bS}{\mathbf{S}}

\newcommand{\bT}{\mathbf{T}}

\newcommand{\bV}{\mathbf{V}}

\newcommand{\balpha}{\boldsymbol{\alpha}}

\newcommand{\bPsi}{\boldsymbol{\Psi}}

\newcommand{\bPhi}{\boldsymbol{\Phi}}

\newcommand{\bLambda}{\boldsymbol{\Lambda}}

\DeclareMathOperator*{\rank}{rank}
\DeclareMathOperator*{\var}{Var}
\DeclareMathOperator*{\diag}{diag}

\DeclareMathOperator*{\tr}{tr}
\newcommand{\bff}{\mathbf{f}}

\newcommand{\cA}{\mathcal{A}}
\newcommand{\cM}{\mathcal{M}}
\newcommand{\cR}{\mathcal{R}}
\newcommand{\cN}{\mathcal{N}}

\newcommand{\dd}{{\rm d}}

\begin{document}
\title{Stable Takens' Embedding Theorem \\ for Non-Uniformly-Sampled Linear Systems}

\author{
    Fisher Ng\IEEEauthorrefmark{1}
    \thanks{\IEEEauthorrefmark{1} F. Ng is with the Department of Applied Mathematics, University of Washington, Seattle, WA (fisherng@uw.edu).}
    and  
    J. Nathan Kutz\IEEEauthorrefmark{2}
    \thanks{\IEEEauthorrefmark{2} J. Kutz is with Autodesk Research, London, UK (nathan.kutz@autodesk.com).}
}



\maketitle

\begin{abstract}
Takens' time-delay embedding theorem provides conditions under which delay-coordinate maps, formed using uniformly-sampled time series of trajectories evolving on attractors of dynamical systems, can faithfully represent the dynamics of the original system. 
Nonlinear systems can be highly sensitive, and Takens' theorem does not provide guarantees about the stability of time-delay embeddings.
In the linear setting, statements about the stability of time-delay embeddings are more tractable and have been proven for delay-coordinate maps with evenly-spaced delays. 
In many experimental applications, however, time series data may be non-uniformly-sampled, especially in systems with multiple timescales or when using event-based rather than time-based sampling techniques.
In this paper, we extend the theorems for the stable linear Takens' embeddings to the setting where the delay-coordinate maps involve unevenly-spaced delays. 
We pose a conjecture about the rank of generalized Vandermonde matrices that capture the temporal structure of time-delay embeddings.
We prove that, provided the conjecture holds, existing theorems about stable linear Takens' embeddings readily extend to unevenly-sampled settings, and the quality of the embedding converges to the same asymptotic bounds when using a large number of delays in the delay-coordinate map, a result which is supported by numerical simulations.
\end{abstract}

\begin{IEEEkeywords}
Takens' Embedding Theorem, Time-Delay Embedding, Delay-Coordinate Map, Stable Embedding, Non-Uniform Sampling, Transfinite Diameter, Generalized Vandermonde Matrix, Time Series
\end{IEEEkeywords}

\section{Introduction}
\IEEEPARstart{T}{he} information available about many dynamical systems of interest comes in the form of partial observations of the full state captured in sequences of time snapshots known as time series.
To model the original dynamics, delay-coordinate maps--vectors consisting of the observed states and their consecutive time-lagged observations-- are often created to represent the dynamics of the system inherent in the time series data in a latent ``delay-coordinate" space. 
Takens \cite{Takens_1981} showed that, for solution trajectories of a dynamical system evolving on a low-dimensional attractor, under mild conditions a generic delay-coordinate map is an embedding: a smooth one-to-one map the derivative of which is also one-to-one.
Embeddings preserve all relevant topological and differential information of the original system, which for dynamical systems implies distinct states remain distinguishable under the mapping, as does the evolution of their solution trajectories.
Takens' time-delay embedding theorem thus provides a justification for why it is reasonable to assume that incomplete observations of a system taken over time can be used to reasonably model the complete, original state dynamics of a system at a particular instant in time.

To be more precise, consider a dynamical system $\dot{\bx}(t) = \bff(\bx(t))$ with state $\bx(t) \in \R^n$ evolving on an attractor $\cM$ of dimension $d$ with $d \le n$, observed at intervals of $\tau$ by a single measurement apparatus modeled by a smooth observation function $h: \R^n \to \R$. 
Takens \cite{Takens_1981} and later Sauer et al. \cite{SYC_1991} showed that delay-coordinate maps $\bPsi: \R^n \to \R^\ell$ of the form
\begin{align} \label{eq:Delay_Coordinate_Map}
    \bPsi_{\{h, \tau, \ell\}}(\bx(t))
    = \begin{bmatrix}
        h(\bx(t)) \\ h(\bx(t - \tau)) \\ \vdots \\ h(\bx(t - (\ell-1) \tau)
    \end{bmatrix}
\end{align}
preserve, with probability one, the topology of the original dynamics on the attractor, provided $\ell \ge 2d+1$ and $\tau$ satisfies some conditions relating to the periodic orbits of the solutions.

\begin{figure*}[t]
    \centering
    \includegraphics[width=0.9\textwidth]{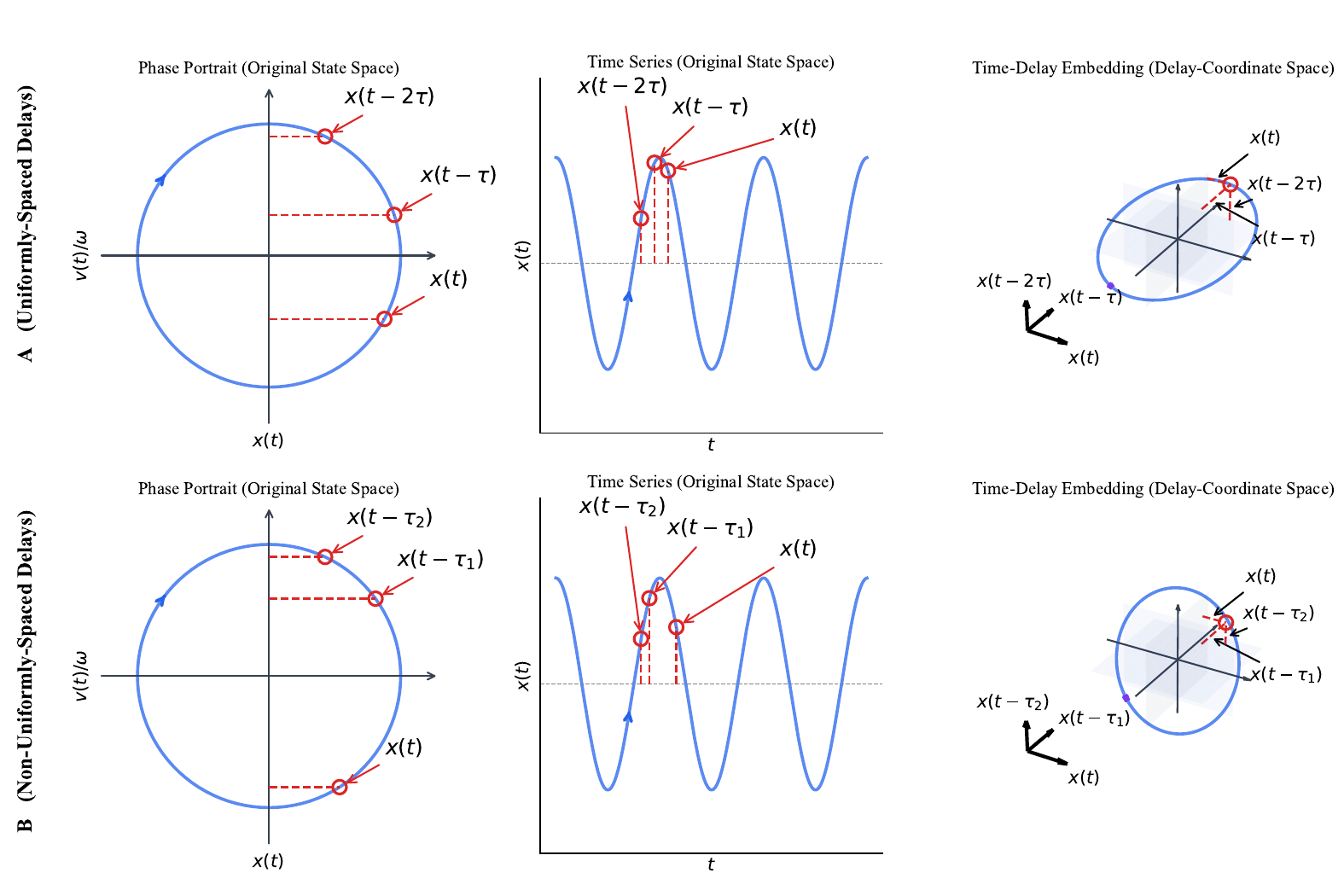}
    \caption{An undamped harmonic oscillator $\ddot{x}(t) + \omega x(t) = 0$ with solutions $x(t) = \cos(\omega t)$ and $v(t) = -\omega \sin(\omega t)$ is a class $\cA(1)$ system (see Definition \ref{def:Class_Ad_Sys}).
    For this setting, $\omega = 2\pi$.
    For $x(t)$ being the observed state variable (with the observation function being $h: [x(t), v(t)] \to x(t)$, i.e. measuring only the position and not velocity), we plot the position versus normalized velocity phase portraits, the time series for the position function, and the $x(t)$-based time-delay embeddings using $\ell = 3$ delays, depicted for (A) the case of uniformly-spaced delays in the delay-coordinate map with $\tau = 0.13$ $(x(t), x(t - \tau), x(t - 2 \tau))$, and (B) the case of the non-uniformly-spaced delays with $\tau_1 = 0.08$ and $\tau_2 = 0.34$ $(x(t), x(t - \tau_1), x(t - \tau_2))$.
    The delay-coordinate maps preserve the topological and differential structure of the attractor in the reconstruction space, ensuring the original signal is recoverable, but differ from each other based on the choice of time delays.}
    \label{fig:Time_Delay_Embedding}
\end{figure*}

Takens' theorem is a statement of the existence and prevalence of embeddings, but, as originally stated, it does not provide metrics on the stability of mappings for particular $h$ and $\tau$. 
Because Takens formulated his time-delay embedding theorem for general nonlinear systems observed with nonlinear observation functions, the nonlinearity makes it difficult, although not impossible--as shown by \cite{Yap_2014, Eftekhari_2018}--to make statements about the stability of time-delay embeddings.
For linear dynamical systems evolving on their attractors, which consists of fixed points or oscillatory motion, a stability analysis becomes feasible, as was performed by Rozell and Yap \cite{Yap_2010, Yap_2011}. 
In both the linear and nonlinear settings, the stability of a time-delay embedding depends on the geometry of the attractor, the number of delay coordinates used, and the extent to which the set of observation functions used is able to observe all the dimensions of the given attractor.

While in most applications the system is observed at evenly-spaced intervals, leading to delay-coordinate maps with evenly-spaced lags, there are a number of applications where non-uniformly-spaced delays are of interest.
Thus, an important question is whether delay-coordinate maps of linear systems constructed with unevenly-spaced lags are still embeddings and, if so, under what conditions they are stable and to what degree they are stable.

Several works have given attention to the reconstruction of signals using non-uniformly sampled signals \cite{Sommen_2008, Vlanchos_2010, Faes_2011, Han_2018, Tao_2018}.
Non-uniform delay-coordinate maps are valuable in multi-scale modeling where systems exhibit dynamics at multiple different timescales \cite{Judd_1998, Hirata_2006, Gao_2021, Tan_2023}. 
In systems with fast-slow time dynamics, using the slow timescale as the single, uniform delay may limit the delay-coordinate map's \eqref{eq:Delay_Coordinate_Map} ability to resolve the fast dynamics in the embedding, as the fast dynamics may appear as noise in the reconstruction space \cite{Tan_2023}. 
Thus, allowing delay-coordinate maps to combine delays of fast and slow frequencies may improve the embedding quality.

In other situations, while the system itself may not involve multi-scale dynamics, the measurement apparatus may sample the system at non-uniformly-spaced intervals, especially when considering mobile sensors or event-based rather than time-based sensing.
Sauer \cite{Sauer_1994}, for example, proposed an event-based ``interspike interval" detection mechanism, which, rather than measuring a system at set time intervals, initiates a measurement based on the state of the system.
Huke et al. \cite{Huke_2007} established theoretical results on the conditions needed for delay-coordinates using such a mechanism to be embeddings and extended it to time-based observation functions in general.

A central challenge with delay-coordinate maps with unevenly-spaced delays is that, unlike delay-coordinate maps with evenly-spaced delays that have only two parameters to optimize (namely, the number of delays $\ell$ and the time delay $\tau$), maps of non-uniform delays must optimize over $\ell$ distinct delay parameters $\{\tau_k\}_{k = 1}^\ell$.
The optimization problem is combinatorially complex, making such maps unwieldy and/or intractable.
However, a number of works have developed algorithms to address this and make such mappings relevant \cite{Tan_2023, Shen_2013, GomezGarcia_2014}.

It is the purpose of this paper to provide an extension to the stable linear Takens' embedding for linear systems developed by Yap and Rozell \cite{Yap_2011} to the case of delay-coordinate maps of non-uniform delays.
In Section II, we provide the relevant background for stable linear time-delay embeddings.
In Section III, we establish the main theoretical results for delay-coordinate maps with unevenly-spaced delays, beginning with a description of the connection between Fourier Vandermonde matrices and linear delay-coordinate maps with evenly-spaced delays.
To extend the stable linear Takens' embedding to the non-uniform sampling setting, it is necessary to consider a form of generalized Vandermonde matrices that involve non-integer and unevenly-spaced powers and that have more lags than points considered.
Just as typical Vandermonde matrices require a distinct set of points and a distinct set of consecutive, evenly-spaced integer powers to be full rank--which translate to relatively reasonable assumptions that the underlying frequencies in a given signal are not duplicated and are sampled at evenly-spaced time intervals, so too would it be reasonable to assume that, if a signal has distinct frequencies and is measured at distinct but not evenly-spaced time instants, then the generalized Vandermonde matrix modeling it ought to be full rank with probability one.
To verify the claim, we provide conditions under which such generalized Vandermonde matrices are full rank for trivial cases where the matrix involves $m = 1,2,3$ distinct points and $\ell \ge m$ time powers, and also for the asymptotic regime as $\ell \to \infty$ for a few example random time-sampling schemes, namely those that obey uniform, exponential, and normal distributions.
Empirical results also support the claim that the generalized Vandermonde matrices considered are full rank under relatively mild assumptions.
Provided such generalized Vandermonde matrices are full column rank, it is possible to then prove a theorem for the existence of stable Takens' time-delay embeddings of dynamics of linear dynamical systems using delay coordinate maps with unevenly-spaced delays, as well as another theorem providing explicit results about the limiting bounds to which the delay-coordinate maps converge as $\ell \to \infty$.
Numerical results on simulated dynamical systems support the newly-derived stable embedding theorems.

\section{Background}

\subsection{Preliminaries}
In mostly keeping with the conventions used by Yap and Rozell \cite{Yap_2011}, for a vector $\bu = [u_1, ..., u_2]^T \in \C^n$, we denote the element-wise complex conjugate as $\bu^*$, its regular transpose as $\bu^T$, and its Hermitian transpose as $\bu^H = (\bu^*)^T$. 
We use $i$ as the imaginary unit. 
We let $\tr(\cdot)$ denote the trace of a matrix--the sum of its diagonal entries, $\det(\cdot)$ the matrix determinant, and $\kappa(\cdot)$ the $2$-norm condition number, computed as the ratio of the maximal to the minimal singular value of the matrix.

To analyze the rank, condition number, and determinant of a matrix, one helpful tool is the following eigenvalue localization theorem by Gerschgorin, which, using only explicit information about the entries of a matrix, generates regions in which the set of all the eigenvalues of the matrix is guaranteed to be contained.
\medskip

\begin{theorem}\label{thm:Gerschgorin}
    (Gerschgorin Circle Theorem \cite{Gerschgorin_1931, Horn_Johnson_1985}) Let $\bS \in \C^{m \times m}$ be a matrix with entries $s_{pq}$ with eigenvalues $\{\lambda_p\}_{p = 1}^m$.
    For $p \in \{1,...,m\}$, let $R_p$ be the sum of the absolute values of the non-diagonal entries in the $p$th row of $\bS$: $R_p := \sum_{q = 1, p \neq q}^m |s_{pq}|$.
    Define the closed disc $D(s_{pp}, R_p) \subseteq \C$, centered at $s_{pp}$ with radius $R_p$, as the $p$th Gerschgorin disc.
    Then, every eigenvalue of $\bS$ lies within at least one of the Gerschgorin discs $D(s_{pp},R_{p})$. 
    Denote by $\lambda(\bS)$ the spectrum of $\bS$--the collection of all of the eigenvalues of $\bS$.
    Then $\lambda(\bS) = \bigcup_{p = 1}^m \lambda_p(\bS) \subseteq \bigcup_{p = 1}^m D(s_{pp},R_p)$.
\end{theorem}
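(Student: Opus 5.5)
Let $\lambda$ be an arbitrary eigenvalue of $\bS$, with a nonzero eigenvector $\bv = [v_1, \dots, v_m]^T \in \C^m$, so that $\bS \bv = \lambda \bv$. The strategy is to look at the single row of this equation indexed by the entry of $\bv$ of largest modulus. That row will pin $\lambda$ inside the corresponding Gerschgorin disc. Since $\lambda$ is arbitrary, the union statement $\lambda(\bS) \subseteq \bigcup_{p=1}^m D(s_{pp}, R_p)$ then follows immediately.

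Concretely, choose an index $p$ with $|v_p| = \max_{q} |v_q|$. Because $\bv \neq 0$, we have $|v_p| > 0$. The $p$th row of $\bS\bv = \lambda \bv$ reads $\sum_{q=1}^m s_{pq} v_q = \lambda v_p$. Moving the diagonal term to the right-hand side gives
\begin{align*}
    (\lambda - s_{pp}) v_p = \sum_{q \neq p} s_{pq} v_q .
\end{align*}
Take absolute values and apply the triangle inequality, together with $|v_q| \le |v_p|$ for every $q$. This yields $|\lambda - s_{pp}|\,|v_p| \le \sum_{q\neq p} |s_{pq}|\,|v_p| = R_p |v_p|$. Dividing by $|v_p| > 0$ gives $|\lambda - s_{pp}| \le R_p$, that is, $\lambda \in D(s_{pp}, R_p)$.

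The argument has no real obstacle. The one point needing care is the choice of the maximal-modulus index, because it is what guarantees both the comparison $|v_q|/|v_p| \le 1$ and the nonzero divisor. The bound holds even when $R_p = 0$, in which case the disc degenerates to the single point $s_{pp}$. Finally, the index $p$ depends on $\lambda$, which is why the conclusion is only that each eigenvalue lies in \emph{some} disc, and hence in their union.
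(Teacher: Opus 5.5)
Your proof is correct and complete. Choosing the eigenvector component of maximal modulus, isolating the diagonal term in that row, and applying the triangle inequality gives $|\lambda - s_{pp}| \le R_p$ for some $p$, which yields the union containment. The paper gives no proof of its own here; it quotes the result from Gerschgorin and Horn--Johnson, and your argument is the standard proof found in those references.
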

\bigskip

By ensuring that zero is not in the eigenvalue inclusion set defined by the union of the Gerschgorin discs of a matrix, a condition known as strict diagonal dominance where $|s_{pp}| > R_p$ for all $p \in \{1,...,m\}$, it follows that zero cannot be an eigenvalue of $\bS$, and thus $\bS$ is full rank.
The extremal values of the Gerschgorin disc set offer upper and lower bounds on the eigenvalues of the matrix $\bS$, which can be used to establish upper bounds on $\kappa(\bS)$ and potentially $\det(\bS)$.

\subsection{Dissipative Linear Systems}
Consider the linear, autonomous dynamical system described by the differential equation
\begin{align}\label{eq:ODE_System}
    \dot{\bx}(t) = \bA \bx(t) \quad \mbox{s.t.} \quad \bx(0) = \bx_0
\end{align}
where $\bx(t) \in \R^n$ is the state at time $t$, $\bA \in \R^{n \times n}$ is the matrix vector field that describes the governing dynamics of the system, and $\bx_0$ is the initial condition.
The solution trajectories of \eqref{eq:ODE_System} take the form $\bx(t + s) = e^{\bA t} \bx(s)$, where $\bPhi(t) := e^{\bA t}$ is the \textit{flow matrix}--the matrix that when applied to a state at time $s$ evolves it along the solution trajectory by a time interval of duration $t$. (When $s = 0$, $\bx(t) = e^{\bA t} \bx_0$).

As an additional condition, assume the linear system \eqref{eq:ODE_System} is dissipative.
Here, we use the term dissipative in the sense that none of the eigenvalues of the state matrix $\bA$ have a positive real component so that exponential growth does not occur and that at least one eigenvalue has a negative real component so that solution trajectories decay over time in some dimension, but that purely imaginary pairs of eigenvalues are allowed so that constant oscillatory motion is allowed.
When dynamical systems are dissipative, they tend to scatter energy over time and, in their long-term asymptotic behavior, settle to a compact, low-dimensional subspace within the ambient high-dimensional state space known as an attractor.
Unlike transient solutions that might involve exponential growth or decay and thus may be high-dimensional and unbounded, solutions on the attractor are inherently low-dimensional and, due to the compactness of the attractor, are confined to certain behavior for all time.
Provided enough samples are taken of states on the attractor, the task of predicting the evolution of solution trajectories on the attractor may tractable, as predictions would be ``within distribution", so to speak, as opposed to predictions for transient solutions, which by their unbounded nature may end up out-of-distribution relative to the already available data.
Thus, that solutions on the attractor evolve in a low-dimensional and compact space and represent the long-term dynamic equilibrium of a system makes such solutions particularly interesting.

For linear systems, attractors can consist of fixed points or purely oscillatory motion, a notion which the following definition formalizes. 
\medskip

\begin{definition}\label{def:Class_Ad_Sys}
    (Def. II.1 \cite{Yap_2011}) 
    A linear dynamical system in $\R^n$ as in \eqref{eq:ODE_System} is of \textit{class} $\cA(d)$ for $d \le \frac{n}{2}$ if the system matrix $\bA$ is real, full-rank, and has distinct eigenvalues. 
    Moreover, $\bA$ has only $d$ strictly imaginary conjugate pairs of eigenvalues and the rest of its eigenvalues have real components strictly less than zero. 
    The strictly imaginary conjugate pairs of eigenvalues are called the $\cA$-eigenvalues, and they can be expressed as $\{\pm i \theta_j\}_{j = 1}^d$ where $\{\theta_j\}_{j = 1}^d > 0$ are $d$ distinct numbers. 
    The corresponding unit-norm $\cA$-\textbf{eigenvectors} are $\bv_1, \bv_1^*, \cdots, \bv_d, \bv_d^*$. 
    The corresponding eigenvalues of the flow matrix $\bPhi(t)$, the $\cA_{\bPhi}$-\textbf{eigenvalues}, and are $\{e^{\pm i \theta_j \tau}\}_{j = 1}^d$.
\end{definition}
\bigskip

Let $\bLambda = \diag(-i \theta_1, i \theta_1, ..., -i \theta_d, i \theta_d)$ be the diagonal matrix consisting of $\cA$-eigenvalues, and $\bV = \begin{bmatrix}
\bv_1 & \bv_1^* & \cdots & \bv_d & \bv_d^* \end{bmatrix} \in \C^{n \times 2d}$ be the collection of the $\cA$-eigenvectors with $\rank(\bV) = 2d$.
The flow matrix $\bPhi(t)$ and the vector field $\bA$ share the same eigenvalues $\bD(t) := e^{-\bLambda t}$. $\bPhi(t) \bV = \bV \bD(t)$.

The following definition provides a means to define the solutions that evolve on the attractor.
\medskip

\begin{definition}\label{def:Attractor}
    (Def. II.2 \cite{Yap_2010}) Let a linear dynamical system be of class $\cA(d)$ and let $\bx_0 = \bV \balpha_0 \in \R^n$ for some $\balpha_0 \in \C^{2d}$ be an arbitrary initial state of the system. We define the \textbf{attractor} of this linear dynamical system to be $\cM = \{\bx \in \R^n | \bx = \bV e^{\bLambda t} \balpha_0, t \in \R\}$.
\end{definition}
\smallskip

\subsection{Time-Delay Embedding of Linear Systems}
Suppose the dynamical system \eqref{eq:ODE_System} is sampled using a linear observation function, represented by the vector $\bh \in \R^n$, at regular intervals of length $\tau > 0$, generating measurements $\by(t) = \bh^T \bx(t) \in \C$ that form a collection $\{\by(t - k \tau)\}_{k = 0}^{\ell-1}$. 
The delay-coordinate map $\bPsi_{\{\bh, \bPhi, \tau\}}: \R^n \to \R^\ell$ is defined as:

\begin{align} \label{eq:Uniform_Delay_Coordinate_Map}
    \bPsi(\bx(t)) 
    = \begin{bmatrix}
        \by(t) \\ \by(t - \tau) \\ \vdots \\ \by(t - (\ell-1) \tau)
    \end{bmatrix}
    = \begin{bmatrix}
        \bh^T \bx(t) \\ \bh^T \bPhi^{-\tau} \bx(t) \\ \vdots \\ \bh^T \bPhi^{-(\ell - 1) \tau} \bx(t)
    \end{bmatrix}
\end{align}
For a delay-coordinate map to faithfully represent the original dynamics of the system, the observation function $\bh$ and the lag $\tau$ must be chosen to avoid aliasing and ensure the uniqueness of each state is preserved.
Ideally, we would like to have a way to quantify how accurately a particular delay-coordinate map represents the original dynamics, and the notion of stability becomes particularly important.
The following definition provides a notion of a stable embedding and a way to quantify whether nearby points in the original state space remain close to each other under the delay-coordinate map and vice versa.
\medskip

\begin{definition}
    (Def. II.3 \cite{Yap_2011}) Suppose we have a dynamical system in $\R^n$ that converges to an attractor $\cM$ and a linear map $\bPsi: \R^n \to \R^\ell$. 
    We say that $\bPsi$ is a stable embedding of $\cM$ with conditioning $\delta$ if for all $\bx_1, \bx_2 \in \cM$ and for some scaling constant $C$, we have
    \begin{align} \label{eq:Stable_Embedding}
        C(1 - \delta) \le \frac{\|\bPsi(\bx_1) - \bPsi(\bx_2)\|_2^2}{\|\bx_1 - \bx_2\|_2^2} \le C(1 + \delta).
    \end{align}
\end{definition}
\medskip

Yap and Rozell proved that delay-coordinate maps as in \eqref{eq:Uniform_Delay_Coordinate_Map} using linear observation functions for solutions of linear systems evolving on their attractors form stable embeddings \cite{Yap_2010, Yap_2011}.
The first theorem \cite{Yap_2011} proved establishes the existence of stable Takens' embeddings for linear systems.
\medskip

\begin{theorem} \label{thm:Stable_Takens_Embedding_1}
    (Linear Takens' Embedding; Theorem III.1 \cite{Yap_2010, Yap_2011}) Consider a class-$\cA(d)$ linear dynamical system in $\R^n$ in steady-state. 
    Let $\tau > 0$ be the sampling rate, $\bh \in \R^n$ be the observation function, and $\bPsi$ be the delay-coordinate map with $\ell$ delays as in \eqref{eq:Uniform_Delay_Coordinate_Map}. 
    Suppose that $\ell \ge 2d$, the $\cA_{\bPhi}$-eigenvalues $\{e^{\pm i \theta_j \tau}\}$ are distinct and purely imaginary, and $\bv_j^H \bh \neq 0$ for all $j \in \{1,...,d\}$. 
    Then, for all distinct pairs of points $\bx_1, \bx_2 \in \cM$, $\bPsi$ satisfies \eqref{eq:Stable_Embedding} for some constants $C$ and $\delta < 1$.
\end{theorem}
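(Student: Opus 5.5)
Because everything here is linear, I would reduce the statement to bounding the extreme singular values of a finite matrix. For $\bx_1, \bx_2 \in \cM$, Definition \ref{def:Attractor} gives $\bx_1 - \bx_2 = \bV \balpha$ for some $\balpha \in \C^{2d}$. The vector $\balpha$ has conjugate-paired entries, because the difference is real. By linearity of $\bPsi$ and the relation $\bPhi(t)\bV = \bV\bD(t)$, the $k$th entry of $\bPsi(\bx_1)-\bPsi(\bx_2)$ equals $\bh^T \bV \bD(-k\tau)\balpha$. Stacking the rows gives
\begin{align*}
    \bPsi(\bx_1) - \bPsi(\bx_2) = \bG \bH \balpha, \qquad \bH := \diag(\bh^T \bv_1, \bh^T \bv_1^*, \ldots, \bh^T \bv_d, \bh^T \bv_d^*),
\end{align*}
where $\bG \in \C^{\ell \times 2d}$ is a Fourier-type Vandermonde matrix. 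Its $(k,j)$ entry is $z_j^{k}$, with nodes $z_j \in \{e^{\pm i\theta_j\tau}\}$ and $k = 0,\ldots,\ell-1$; the exact sign convention is immaterial. The Rayleigh quotient in \eqref{eq:Stable_Embedding} then becomes
\begin{align*}
    \frac{\balpha^H \bH^H \bG^H \bG \bH \balpha}{\balpha^H \bV^H \bV \balpha}.
\end{align*}

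I would bound the numerator and denominator separately. For the denominator, $\bV$ has full column rank $2d$, so
\begin{align*}
    \sigma_{\min}(\bV)^2\|\balpha\|^2 \le \|\bV\balpha\|^2 \le \sigma_{\max}(\bV)^2\|\balpha\|^2
\end{align*}
with $\sigma_{\min}(\bV) > 0$. For the numerator, I first note that $\bH$ is invertible: $\bh$ is real, so $\bh^T\bv_j^* = (\bh^T\bv_j)^*$, and the hypothesis $\bv_j^H\bh \neq 0$ makes every diagonal entry nonzero. The remaining factor is $\bG$. The $\cA_{\bPhi}$-eigenvalues are $2d$ distinct points on the unit circle and $\ell \ge 2d$, so the leading $2d \times 2d$ block of $\bG$ is a square Vandermonde matrix on distinct nodes. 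Its determinant is $\prod_{p<q}(z_q - z_p) \neq 0$, hence $\bG$ has full column rank and $\sigma_{\min}(\bG) > 0$. Combining the three factors gives
\begin{align*}
    \frac{\sigma_{\min}(\bG)^2\sigma_{\min}(\bH)^2}{\sigma_{\max}(\bV)^2} \le \frac{\|\bPsi(\bx_1)-\bPsi(\bx_2)\|^2}{\|\bx_1-\bx_2\|^2} \le \frac{\sigma_{\max}(\bG)^2\sigma_{\max}(\bH)^2}{\sigma_{\min}(\bV)^2},
\end{align*}
for every pair of distinct points in $\cM$.

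To finish, I would write the lower and upper constants as $L \le U$ and set $C = (L+U)/2$ and $\delta = (U-L)/(U+L)$. Since $L > 0$, this gives $\delta < 1$. Optionally, one can make $\bG$ quantitative with Theorem \ref{thm:Gerschgorin} applied to $\bG^H\bG$. Its diagonal entries are all $\ell$, and its off-diagonal entries are geometric sums bounded by $1/|\sin((\omega_p-\omega_q)/2)|$, where $\omega_p$ denotes the argument of $z_p$. This gives eigenvalue bounds of the form $\ell \pm (2d-1)\max_{p\neq q} |\sin((\omega_p-\omega_q)/2)|^{-1}$, which become nontrivial once $\ell$ is large. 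For the theorem as stated, however, qualitative full rank is enough.

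The main obstacle is the full-rank claim for $\bG$, which is exactly where the hypotheses on $\tau$ enter: distinctness of the $\cA_{\bPhi}$-eigenvalues prevents aliasing. Once that is settled, the remaining steps are routine singular-value bookkeeping. The one point to check carefully there is that restricting to real differences, which forces conjugate-paired $\balpha$, does not break the bounds. It does not, because the bounds hold for all $\balpha \in \C^{2d}$.
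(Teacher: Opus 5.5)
Your proposal is correct and follows the same route as the argument the paper relies on: Yap and Rozell's Theorem III.1, which the paper reuses in its proof of Theorem \ref{thm:Nonuniform_Stable_Takens_Embedding_1}. That argument factors the embedding matrix into a Vandermonde factor (your $\bG$, the paper's $\bT$) times the invertible diagonal $\bH$, and uses distinct $\cA_{\bPhi}$-eigenvalues with $\ell \ge 2d$ to get full column rank. It then bounds the ratio in \eqref{eq:Stable_Embedding} by extreme singular values; your explicit choice $C=(L+U)/2$, $\delta=(U-L)/(U+L)$ simply makes that last step concrete.
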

\bigskip

In order to ensure the system frequencies are distinct, a sufficient, although not necessary, condition is $\tau < \frac{\pi}{\theta_{\max}}$ where $\theta_{\max} = \max_{j \in \{1,...,d\}}\{\theta_j\}$ \cite{Yap_2011}.
The next theorem provides explicit conditions under which a time-delay embedding is stable and establishes explicit bounds on the quality of the embedding. 
First, define $A_1 = \sigma_{\min}(\bV)$ and $A_2 = \sigma_{\max}(\bV)$, where $\sigma(\cdot)$ denotes the singular value of a matrix. 
Let 
\begin{align}\label{eq:kappa_vals}
    \kappa_1 = \min_{j \in \{1,...,d\}} \frac{\|\bv_j^H \bh\|}{\|\bh\|_2} \qquad \kappa_2 = \max_{j \in \{1,...,d\}} \frac{\|\bv_j^H \bh\|}{\|\bh\|_2}
\end{align}
which identify the attractor dimensions that are ``least" and ``most" observable by the particular observable $\bh$. 
Finally, let 
\begin{align}
    \nu := \max_{p \neq q} \left\{\frac{1}{|\sin(\theta_p \tau)|}, \frac{1}{\sin\left(\frac{(\theta_p + \theta_q) \tau}{2} \right)}, \frac{1}{\sin\left(\frac{(\theta_p - \theta_q) \tau}{2} \right)} \right\},
\end{align}
a measure which quantifies the tendency of the embedding to introduce aliasing, a property that depends on the choice of delay $\tau$ and the system frequencies $\{\pm\theta_j\}_{j = 1}^d$. 
With the above terms, the second theorem proved by \cite{Yap_2011} is as follows.
\medskip

\begin{theorem} \label{thm:Stable_Takens_Embedding_2}
    (Stable Linear Takens' Embedding; Theorem III.2 \cite{Yap_2011}) Consider a class-$\cA(d)$ linear system in $\R^n$ in steady-state. 
    Let $\tau > 0$ be the sampling rate, $\bh \in \R^n$ be the observation function with $\|\bh\|_2^2 = \frac{2d}{\ell}$, and $\bPsi$ be the delay-coordinate map \eqref{eq:Uniform_Delay_Coordinate_Map} with $\ell$ delays. 
    Suppose that $\ell > \left((2d-1)\frac{A_2 \kappa_2^2}{A_1 \kappa_1^2} \nu \right)$, the $\cA_{\bPhi}$-eigenvalues $\{e^{\pm i \theta_j \tau}\}$ are distinct and imaginary, and $\bv_j^H \bh \neq 0$ for all $j \in \{1,...,d\}$. 
    Then, for all distinct pairs of points $\bx_1, \bx_2 \in \cM$, $\bPsi$ satisfies \eqref{eq:Stable_Embedding} for constants $C:= d \left(\frac{\kappa_1^2}{A_2} + \frac{\kappa_2^2}{A_1}\right)$ and $\delta := \delta_0 + \delta_1(\ell)$, where
    \begin{align}
        \delta_0 := \frac{A_2 \kappa_2^2 - A_1 \kappa_1^2}{A_2 \kappa_2^2 + A_1 \kappa_1^2}, \, \, \delta_1(\ell) = \frac{(2d-1) \nu}{\ell} \frac{2 A_2 \kappa_2^2}{A_2 \kappa_2^2 + A_1 \kappa_1^2}
    \end{align}
\end{theorem}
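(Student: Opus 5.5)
The plan is to reduce the ratio in \eqref{eq:Stable_Embedding} to a Rayleigh quotient involving a Vandermonde-type Gram matrix, and then control its spectrum with Theorem~\ref{thm:Gerschgorin}.

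\textbf{Step 1: factor the map.} For $\bx_1,\bx_2\in\cM$, write $\bx_1-\bx_2=\bV\balpha$ with $\balpha\in\C^{2d}$; since both points are real, the entries of $\balpha$ come in conjugate pairs. Because $\bPhi^{-k\tau}\bV=\bV\bD(-k\tau)$, the $k$th row of $\bPsi(\bx_1)-\bPsi(\bx_2)$ is $\bh^T\bV\bD(-k\tau)\balpha$. Hence
\[
\bPsi(\bx_1)-\bPsi(\bx_2)=\bG\,\bH\,\balpha ,
\]
where $\bH=\diag(\bh^T\bv_1,\bh^T\bv_1^*,\dots)$ and $\bG\in\C^{\ell\times 2d}$ is the Fourier Vandermonde matrix with entries $e^{\pm i\theta_j k\tau}$, $k=0,\dots,\ell-1$.

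\textbf{Step 2: bound $\bG^H\bG$.} The diagonal entries of $\bG^H\bG$ all equal $\ell$. Each off-diagonal entry is a geometric sum $\sum_k e^{i\omega k\tau}$ with $\omega\in\{\theta_p\pm\theta_q,\,2\theta_p\}$. In modulus this sum equals $|\sin(\ell\omega\tau/2)|/|\sin(\omega\tau/2)|$, which is at most $\nu$; the distinctness hypothesis on the $\cA_{\bPhi}$-eigenvalues guarantees every denominator is nonzero. Each row has $2d-1$ off-diagonal entries, so Theorem~\ref{thm:Gerschgorin} places every eigenvalue of the Hermitian matrix $\bG^H\bG$ in
\[
\bigl[\ell-(2d-1)\nu,\ \ell+(2d-1)\nu\bigr].
\]

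\textbf{Step 3: sandwich the quotient.} Bound $\|\bG\bH\balpha\|^2$ above by $\lambda_{\max}(\bG^H\bG)\,\|\bh\|^2\kappa_2^2\|\balpha\|^2$ and below by the analogous expression with $\lambda_{\min}(\bG^H\bG)$ and $\kappa_1^2$. Then bound $\|\bx_1-\bx_2\|^2=\|\bV\balpha\|^2$ between $A_1\|\balpha\|^2$ and $A_2\|\balpha\|^2$, with $A_1,A_2$ interpreted as the extreme eigenvalues of $\bV^H\bV$ to match the constants in the statement. Dividing gives
\[
\frac{\|\bh\|^2\kappa_1^2\bigl(\ell-(2d-1)\nu\bigr)}{A_2}\;\le\;\frac{\|\bPsi(\bx_1)-\bPsi(\bx_2)\|^2}{\|\bx_1-\bx_2\|^2}\;\le\;\frac{\|\bh\|^2\kappa_2^2\bigl(\ell+(2d-1)\nu\bigr)}{A_1}.
\]

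\textbf{Step 4: match the constants.} Substitute $\|\bh\|^2=2d/\ell$ so the factors of $\ell$ cancel. Take $C$ to be the midpoint of the two bounds with the $\nu$-terms dropped. Writing the upper and lower bounds as $C(1\pm\delta)$ and solving for $\delta$ gives $\delta_0$ as the normalized half-gap between $A_2\kappa_2^2$ and $A_1\kappa_1^2$, plus the perturbation $\delta_1(\ell)$ coming from the Gerschgorin radius. The hypothesis on $\ell$ is exactly what is needed for $\delta<1$, equivalently for the lower bound to be positive.

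\textbf{Main obstacle.} I expect the delicate part to be the bookkeeping in Step 4. The paper's form of $C$ puts $A_2$ with $\kappa_1$ and $A_1$ with $\kappa_2$, so the two bounds must be combined with the correct pairings. One must also check that the worst-case Gerschgorin perturbation enters only through the $A_2\kappa_2^2$ term, which gives the factor $2A_2\kappa_2^2/(A_2\kappa_2^2+A_1\kappa_1^2)$ in $\delta_1$. If the direct pairing does not reproduce these constants, I would relax the lower bound by replacing $\ell-(2d-1)\nu$ with a looser expression that yields the stated $\delta_1$.
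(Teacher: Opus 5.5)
Your proposal is correct and is essentially the Yap--Rozell argument that the paper cites for this theorem (it gives no proof of its own): factor the difference map as a Fourier Vandermonde matrix times the diagonal observation matrix, bound the geometric-sum off-diagonal Gram entries by $\nu$, apply Gerschgorin, and sandwich the Rayleigh quotient. Reading $A_1,A_2$ as the extreme eigenvalues of $\bV^H\bV$ is exactly what makes the stated constants work, and the Step 4 bookkeeping closes with no relaxation: the upper bound gives exactly $\delta_0+\delta_1(\ell)$; the lower bound needs only $\delta_0+\frac{(2d-1)\nu}{\ell}\cdot\frac{2A_1\kappa_1^2}{A_2\kappa_2^2+A_1\kappa_1^2}\le\delta_0+\delta_1(\ell)$; and the hypothesis on $\ell$ is equivalent to $C(1-\delta)>0$, which is stronger than positivity of your Gerschgorin lower bound (that needs only $\ell>(2d-1)\nu$).
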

\bigskip

Theorem \ref{thm:Stable_Takens_Embedding_2} implies that the observation vector $\bh$ must be able to observe each of the $2d$ dimensions of the attractor on which solutions evolve, and that the system and sampling frequencies must be distinct to avoid aliasing.
The embedding approaches isometry--a distance-preserving mapping--when $\delta \to 0$.
As $\ell \to \infty$, $\delta = \delta_0 + \delta(\ell) \to \delta_0$, and for perfect isometry to occur, we must have $\delta \to 0$, and thus $\delta_0 \to 0$. 
This occurs provided the dynamics of the system attractor uniformly fill the state space (i.e. $A_1 \approx A_2$) and provided the observation function captures information on each of the dimensions uniformly (i.e. $\kappa_1 \approx \kappa_2$).
With the stable linear Takens' embedding theorems as references, we can extend the results to delay-coordinate maps using unevenly-spaced time-delays.

\section{Theoretical Results}
\subsection{Non-Uniform Delay-Coordinate Maps}
Given a set of $\ell$ times $\{\tau_k\}_{k = 1}^\ell$, define the non-uniformly-sampled delay-coordinate map $\bPsi_{\{\bh, \bPhi, \{\tau_k\}_{k = 1}^\ell\}}$ as:
\begin{align} \label{eq:Nonuniform_Delay_Coordinate_Map}
    \bPsi(\bx(t)) 
    = \begin{bmatrix}
        \by(t - \tau_1) \\ \by(t - \tau_2) \\ \vdots \\ \by(t - \tau_\ell)
    \end{bmatrix}
    = \begin{bmatrix}
        \bh^T \bPhi^{-\tau_1}\bx(t) \\ \bh^T \bPhi^{-\tau_2} \bx(t) \\ \vdots \\ \bh^T \bPhi^{-\tau_\ell} \bx(t)
    \end{bmatrix}
\end{align}
Consider a solution $\bx(t) = \bV \balpha(t)$ on the attractor $\cM$ as in Definition \ref{def:Attractor}. Component-wise, the embedding $\bPsi(\bx(t))$ is:
\begin{align*}
    (\bPsi(\bx(t)))_{k} 
    &= \bh^T(\bPhi^{\tau_k} \bx(t)) 
    = \bh^T(\bPhi^{\tau_k} \bV \balpha(t)) \\
    &= \bh^T(\bV \bD^{\tau_k} \balpha(t)) 
    = \bg_{\tau_k}^H \balpha(t)
\end{align*}
where $\bg_{\tau_k}^H := \bh^T \bV \bD^{\tau_k}$ expressed element-wise is:
\begin{align*}
    \bg_{\tau_k}^H &= \begin{bmatrix}
        \bv_1^T \bh e^{-i \theta_1 \tau_k}, \bv_1^H \bh e^{i \theta_1 \tau_k}, \cdots, \bv_d^T \bh e^{-i\theta_d \tau_k}, \bv_d^H \bh e^{i \theta_d \tau_k}
    \end{bmatrix}
\end{align*}
Concatenating the vectors $\bg_{\tau_k}^H$ for $k \in \{1,...,\ell\}$ gives rise to the following matrix $\bG$, which maps from the space of coefficients $\balpha \in \C^{2d}$ specifying solutions evolving on the attractor to the output delay-coordinate space in $\C^\ell$:

\begin{align}
    \bG = \begin{bmatrix}
        \bg_{\tau_1}^H \\
        \bg_{\tau_2}^H \\
        \vdots \\
        \bg_{\tau_\ell}^H
    \end{bmatrix} = \bT \bH \in \C^{\ell \times 2d}
\end{align}

For details about the derivation of $\bG$, see the proofs of Theorem III.1 and Theorem III.2 in \cite{Yap_2011}. 
$\bG$ can be partitioned into two matrices: a Vandermonde matrix $\bT$ capturing the temporal information, and a diagonal matrix $\bH$ containing the observed spatial information:
\begin{align*}
    \bT = \begin{bmatrix}
        e^{-i \theta_1 \tau_1} & e^{i \theta_1 \tau_1} & ... & e^{-i \theta_d \tau_1} & e^{i \theta_d \tau_1} \\
        e^{-i \theta_1 \tau_2} & e^{i \theta_1 \tau_2} & ... & e^{-i \theta_d \tau_2} & e^{i \theta_d \tau_2} \\
        \vdots & \vdots & & \vdots & \vdots \\
        e^{-i \theta_1 \tau_{\ell}} & e^{i \theta_1 \tau_{\ell}} & ... & e^{-i \theta_d \tau_{\ell}} & e^{i \theta_d \tau_{\ell}} \\
    \end{bmatrix} \in \C^{\ell \times 2d}
\end{align*}
\begin{align*}
    \bH = \diag(\bv_1^T \bh, \bv_1^H \bh, ..., \bv_d^T \bh, \bv_d^H \bh) \in \C^{2d \times 2d}
\end{align*}
Only $\bT$ is time-dependent, so in analyzing the effect of unevenly-spaced time-delays on the existence and stability of the time-delay embedding, we must consider generalized Vandermonde matrices $\bT$ that involve non-consecutive, non-integer powers $\{\tau_k\}_{k = 1}^\ell$, which we turn to in the next section.

\subsection{Generalized Vandermonde Matrices and Times Series}
Given points $\{z_p\}_{p = 1}^m \in \C$, the Vandermonde matrix $\bT \in \C^{m \times m}$ \cite{Golub_1996, Horn_Johnson_1985} is defined as:
\begin{align}\label{eq:Vandermonde_Matrix}
    \bT = \begin{bmatrix}
        z_1^0 & z_2^0 & \cdots & z_m^0 \\
        z_1^1 & z_2^1 & \cdots & z_m^1 \\
        \vdots & \vdots & \ddots & \vdots \\
        z_1^{m-1} & z_2^{m-1} & \cdots & z_m^{m-1} \\
    \end{bmatrix}
\end{align}
and its determinant is:
\begin{align} \label{eq:Vandermonde_Determinant}
    \det(\bT) = \prod_{1 \le p \le q}^m (z_p - z_q)
\end{align}
The rank of $\bT$ equals the number of distinct points in $\{z_p\}_{p = 1}^m$.
That is, when $z_p \neq z_q$ for all $p \neq q$, i.e. all $m$ points are distinct, $\det(\bT) \neq 0$, which implies that $\rank(\bT) = m$ and thus $\bT$, as a mapping, preserves all relevant information.
It is well-known that Vandermonde and Vandermonde-like matrices are ill-conditioned \cite{Pan_2016}, and its determinant can grow quite large as additional points are added.
If $\{z_p\}_{p = 1}^m$ are any set of $m$ points within a compact subset $\Omega \subset \C$, then the set of points that maximizes the Vandermonde determinant \eqref{eq:Vandermonde_Determinant}, known as the Fekete points \cite{Fekete1923}, gives rise to:
\begin{align}\label{eq:Finite_Diameter}
    d_m(\Omega) = \max_{\{z_1,...,z_m\} \in \Omega} \left(\prod_{1 \le p \le q}^m |z_p - z_q|\right)^{1/\binom{m}{2}}
\end{align}
Because the Vandermonde matrix is ill-conditioned, the exponent $1/\binom{m}{2}$, which is the number of pairwise products between the $m$ points, is used as a normalization.
Normalization ensures that the limit exists as $m \to \infty$ in \eqref{eq:Finite_Diameter}.
This limit is known as the transfinite diameter of $\Omega$ \cite{Kirsch_2005}:
\begin{align}\label{eq:Transfinite_Diameter}
    \tau(\Omega) = \lim_{m \to \infty}(d_m(\Omega))
\end{align}
While explicit results for Fekete points for general $\Omega$ are often difficult to determine explicitly, when $\Omega$ is the unit circle in the complex plane $S^1 := \{z : |z| = 1\}$, the $m$ Fekete points are the $m$th roots of unity, and $\tau(\Omega) = 1$.

The Vandermonde matrices we consider for time series involve points $\{z_p = e^{\pm i \theta_p}\}_{p=1}^m \subseteq S^1$. 
These types of Vandermonde matrices are known as Fourier Vandermonde matrices and are well-studied in the case for when the time sampling rates are evenly-spaced \cite{Ryan_2009, Tucci_2012, Batenkov_2018, Aubel_2019, Batenkov_2021, Kunis_2021}.
For these matrices, the set of system frequencies is generally not uniformly-distributed on the unit circle, so a given set of system frequencies is unlikely to maximize the Vandermonde determinant \eqref{eq:Vandermonde_Determinant}. 
Furthermore, the above statements apply only to Vandermonde matrices that are square, which only corresponds to delay-coordinate maps where the number of time delays used to construct the map $\ell$ is exactly equal to the underlying dimension of the system attractor $2d$, and matrices that are constructed using consecutive integer powers, which implies the time series data must not be unevenly-sampled.
What is helpful from \eqref{eq:Finite_Diameter} and $\eqref{eq:Transfinite_Diameter}$ is the $1/\binom{m}{2}$ scaling factor for the determinant of the Vandermonde matrix that ensures the determinant converges to $1$ as $m \to \infty$.

To generalize \eqref{eq:Finite_Diameter} and \eqref{eq:Transfinite_Diameter} to settings where more lags than attractor dimensions are used in the delay-coordinate map (i.e. $\ell \ge 2d$) and when the map is constructed of non-uniform lags, we use a set of coordinates $\{z_p\}_{p = 1}^m$ and powers $\{\tau_k\}_{k = 1}^\ell$ to define a generalized Vandermonde matrix, similar to that studied in \cite{Ryan_2009, Nagel_2020}:
\begin{align}\label{eq:Generalized_Vandermonde_Matrix}
    \bT = \bT_{m,\ell} = \begin{bmatrix}
        z_1^{\tau_1} & z_2^{\tau_1} & \cdots & z_m^{\tau_1} \\
        z_1^{\tau_2} & z_2^{\tau_2} & \cdots & z_m^{\tau_2} \\
        \vdots & \vdots & \ddots & \vdots \\
        z_1^{\tau_\ell} & z_2^{\tau_\ell} & \cdots & z_m^{\tau_\ell} \\
    \end{bmatrix} \in \C^{\ell \times m}
\end{align}

The typical approach to prove that Vandermonde matrices $\bT$ as in \eqref{eq:Vandermonde_Matrix} are full rank involves having an explicit expression for the determinant \eqref{eq:Vandermonde_Determinant} and inferring the condition that the $m$ nodes $\{z_p\}_{p = 1}^m$ must be distinct.
Deriving the determinant depends on properties of polynomials of degrees $\{1,...,m\}$.
Consequently, because the generalized Vandermonde matrix $\bT$ as in \eqref{eq:Generalized_Vandermonde_Matrix} involves unevenly-spaced, non-integer powers, the original argument does not readily extend to the generalized setting.
Thus, we state the following, only as a conjecture, about the rank of a generalized Vandermonde matrix.
\medskip

\begin{conjecture} \label{conj:Gen_Vand_Rank}
    Let $\bT_{m,\ell} \in \C^{\ell \times m}$ be as \eqref{eq:Generalized_Vandermonde_Matrix}. 
    Let $\{\tau_k\}_{k = 1}^\ell$ be a set of strictly-positive, distinct powers in the sense that $\tau_a \neq \tau_b$ for any $a,b \in \{1,...,\ell\}$, and let $\{z_p\}_{p = 1}^m \subseteq S^1$ be a distinct set points with associated angles $\{\theta_p\}_{p = 1}^m$ in the interval $(-\pi,\pi]$ in the sense that $\theta_p \neq \theta_q$ for $p,q \in \{1,...,m\}$. 
    Then with probability $1$, $\rank(\bT_{m,\ell}) = \min\{m,\ell\}$.
\end{conjecture}
\bigskip

\begin{IEEEproof}
    The conjecture is trivially true when $m = 1$ and $\ell = 1$ as the matrix $\bT_{1,1} = [z_1^{\tau_1}]$ is non-singular provided $z_1 \neq 0$, a condition which is valid when assuming $z_1 \in S^1$.
    When extending the number of time samples used, so that we consider $\bT_{1,\ell} = [z_1^{\tau_1},...,z_1^{\tau_\ell}]^T \in \C^{\ell \times 1}$, it follows that $\rank(\bT_{1,\ell}) = 1$ for all $\ell \in \mathbb{N}$ because the sub-matrix consisting of the first row is always non-trivial.
    \smallskip

    The case when $m = 2$ and $\ell \ge 2$ is also relatively straightforward to establish. 
    For $m = 2$ and $\ell = 2$:
    \begin{align}
        \bT_{2,2} =
        \begin{bmatrix}
            z_1^{\tau_1} & z_2^{\tau_1} \\
            z_1^{\tau_2} & z_2^{\tau_2}
        \end{bmatrix}
    \end{align}
    the determinant of which is:
    \begin{align}
        \det(\bT_{2,2}) = z_1^{\tau_1} z_2^{\tau_1} (z_2^{\tau_2 - \tau_1} - z_1^{\tau_2 - \tau_1})
    \end{align}
    Letting $z_1 = e^{i \theta_1}$ and $z_2 = e^{i \theta_2}$ with $\theta_1, \theta_2 \in (-\pi,\pi]$ and $\delta_1 := \tau_2 - \tau_1$, then, to ensure $\bT_{2,2}$ is non-singular by showing $\det(\bT_{2,2}) \neq 0$, we require that $z_1, z_2 \neq 0$, which is trivially true when $z_1, z_2 \in S^1,$ and also that $\theta_2 - \theta_1 \neq 2\pi \left(k_1 + \frac{k_2}{\delta_1}\right)$ for $k_1, k_2 \in \mathbb{Z}$--a condition accounting for the multi-valued nature of the functions.
    This condition for unevenly-spaced time delay samples for generalized Vandermonde matrix is analogous to the condition for Fourier Vandermonde matrices with evenly-spaced delay powers that requires $\theta_2 - \theta_1 \neq 2 \pi k$ for $k \in \mathbb{Z}$, albeit with slightly more restrictions.
    
    If $\delta_1$ is a rational number expressed in lowest terms--that is, $\delta_1 = p/q$ where $p, q \ge 1$ are co-prime--then the condition reduces to $\theta_2 - \theta_1 \neq \frac{2\pi k}{p}$ for $k \in \mathbb{Z}$. 
    Note that for $\theta_1, \theta_2 \in (-\pi,\pi]$, the interval of interest for $\theta_2 - \theta_1$ is then $(-2\pi, 2\pi]$, meaning increasing $p$ begins to increase the number of values that can introduce aliasing on $(-2\pi,2 \pi]$, suggesting that non-uniformly-spaced powers can actually be more prone to aliasing than using evenly-spaced powers.
    
    This is further confirmed if $\delta_1$ is irrational.
    Since $\mathbb{Z}$ and $\frac{1}{\delta_1} \mathbb{Z}$ are linearly independent over the rational numbers, for irrational $\delta_1$ the set $2\pi \left(k_1 + \frac{k_2}{\delta_1}\right)$ is Lebesgue measure zero but dense on $(-2\pi,2\pi]$, meaning that non-uniform time sampling that have irrational spacing may be even more prone to aliasing than when using spacings involving rational numbers.
    Notably, while there is a dense set of choices of $\theta_1$ and $\theta_2$ for which $\bT_{2,2}$ will fail to be full rank, since the set is Lebesgue measure zero generic choices of $\theta_1$ and $\theta_2$ will ensure that $\bT_{2,2}$ is full rank with probability one.
    
    For $\bT_{2,\ell}$ with $\ell > 2$ to be non-singular, a necessary and sufficient condition is to require at least one $\bT_{2,2}$ sub-matrix consisting of two rows of $\bT_{2,\ell}$ to be non-singular: that is, $\bT_{2,\ell}$ must have at least two rows that are linearly independent.
    We could form such a $\bT_{2,2}$ sub-matrix using any choice of two rows of $\bT_{2,\ell}$, say with rows raised to time powers $\tau_a$ and $\tau_b$, where $a,b \in \{1,...,\ell\}$.
    The condition for $\rank(\bT_{2,\ell}) = 2$ would then be that there exists at least one set of time pairings $\tau_a, \tau_b$ such that $\theta_2 - \theta_1 \neq 2 \pi \left(k_1 + \frac{k_2}{\tau_a - \tau_b}\right)$.
    Since only one of the $\binom{\ell}{2}$ possible row pairings of $\bT_{2,\ell}$ needs to satisfy the above condition, we need
    \begin{align*}
        \frac{\theta_2 - \theta_1}{2\pi} \in (-1,1)\backslash\{0\} \cup \bigcap_{1 \le b < a \le \ell} \left\{\alpha \in \frac{1}{\delta_{a,b}} \mathbb{Z}: \alpha \in (-1,1]\right\}
    \end{align*}
    As $\ell$ increases, the intersection of the sets of aliased points between the $\binom{\ell}{2}$ pairs of time samples becomes smaller and smaller; thus, increasing lags can mitigate aliasing.
    \smallskip
    
    To show the compounding difficulty of proving the statement for $\ell \ge 3$, we also consider $\ell = 3$. 
    Let $z_1 = e^{i \theta_1}, z_2 = e^{i \theta_2},$ and $z_3 = e^{i \theta_3} \in S^1$, with $\theta_1, \theta_2, \theta_3 \in (-\pi,\pi]$. 
    Then,
    \begin{align}
        \bT_{3,3} = 
        \begin{bmatrix}
            z_1^{\tau_1} & z_2^{\tau_1} & z_3^{\tau_1} \\
            z_1^{\tau_2} & z_2^{\tau_2} & z_3^{\tau_2} \\
            z_1^{\tau_3} & z_2^{\tau_3} & z_3^{\tau_3} \\
        \end{bmatrix}
    \end{align}
    Letting $\delta_1 := \tau_2 - \tau_1$ and $\delta_2 := \tau_3 - \tau_2$, the determinant of $\bT_{3,3}$ can be computed first by subtracting $z_1^{\delta_1}$ times row $1$ from row $2$ and by subtracting $z_1^{\delta_2}$ times row $2$ from row $3$:
    \begin{align*}
        \det(\bT_{3,3}) &= 
        \begin{vmatrix}
            z_1^{\tau_1} & z_2^{\tau_1} & z_3^{\tau_1} \\
            z_1^{\tau_2} - z_1^{\delta_1} z_1^{\tau_1} & z_2^{\tau_2} - z_1^{\delta_1} z_2^{\tau_1} & z_3^{\tau_2} - z_1^{\delta_1} z_3^{\tau_1} \\
            z_1^{\tau_3} - z_1^{\delta_2} z_1^{\tau_2} & z_2^{\tau_3} - z_1^{\delta_2} z_2^{\tau_2} & z_3^{\tau_3} - z_1^{\delta_2} z_3^{\tau_2}
        \end{vmatrix}
    \end{align*}
    By simplifying and expanding by the first column, and computing the resulting $2\times2$ determinant, we have the following:
    \begin{align}\label{eq:T_33}
    \begin{split}
        \det(\bT_{3,3}) &= 
        \begin{vmatrix}
            z_1^{\tau_1} & z_2^{\tau_1} & z_3^{\tau_1} \\
            0 & z_2^{\tau_1}(z_2^{\delta_1} - z_1^{\delta_1}) & z_3^{\tau_1}(z_3^{\delta_1} - z_1^{\delta_1}) \\
            0 & z_2^{\tau_2}(z_2^{\delta_2} - z_1^{\delta_2}) & z_3^{\tau_2}(z_3^{\delta_2} - z_1^{\delta_2})
        \end{vmatrix} \\
        &= z_1^{\tau_1} \cdot \begin{vmatrix}
            z_2^{\tau_1}(z_2^{\delta_1} - z_1^{\delta_1}) & z_3^{\tau_1}(z_3^{\delta_1} - z_1^{\delta_1}) \\
            z_2^{\tau_2}(z_2^{\delta_2} - z_1^{\delta_2}) & z_3^{\tau_2}(z_3^{\delta_2} - z_1^{\delta_2})
        \end{vmatrix} \\ 
        &= z_1^{\tau_1} z_2^{\tau_1} z_3^{\tau_1} (z_2^{\delta_1} - z_1^{\delta_1}) (z_3^{\delta_1} - z_1^{\delta_1}) \\
        &\quad \cdot \left(z_3^{\delta_1} \frac{(z_3^{\delta_2} - z_1^{\delta_2})}{(z_3^{\delta_1} - z_1^{\delta_1})} - z_2^{\delta_1} \frac{(z_2^{\delta_2} - z_1^{\delta_2})}{(z_2^{\delta_1} - z_1^{\delta_1})} \right) \\
    \end{split}
    \end{align}
    For $\bT_{3,3}$ to be non-singular, we require $\det(\bT_{3,3}) \neq 0$.
    Similar to $\bT_{2,2}$, necessarily $z_1, z_2, z_3 \neq 0$, which is true since $z_1, z_2, z_3 \in S^1$.
    Furthermore, the relationships between $z_1, z_2$ and $z_3$ will impose restrictions.
    When working with the non-uniform delays, the multi-valued nature of functions $z^\delta$, where $\delta$ is potentially non-integer, yields sufficient conditions: $\theta_1 - \theta_2 \neq 2\pi (k_1 + \frac{k_2}{\delta_1})$ and $\theta_1 - \theta_3 \neq 2 \pi (k_1 + \frac{k_2}{\delta_1})$ for $k_1,k_2 \in \mathbb{Z}$.
    Like the case of $\bT_{2,2}$, if $\delta_1$ is a rational number in lowest terms $\delta_1 = p/q$ so that $p, q \ge 1$ are co-prime, then the condition reduces to $\theta_1 - \theta_2 \neq \frac{2\pi k}{p}$ and $\theta_2 - \theta_3 \neq \frac{2\pi k}{p}$ for $k \in \mathbb{Z}$. 
    If $\delta_1$ is irrational, then there are a dense set of values in the interval $(-2\pi, 2\pi]$ for which the differences of frequencies lead to aliasing.
    
    The last requirement for $\bT_{3,3}$ to be non-singular demands a more complicated relationship between $z_2$ and $z_3$ as seen in \eqref{eq:T_33}. 
    We can definitely say that if $z_2 = z_3$, then the final term is zero.
    Thus, for the last term to be nonzero, $\theta_2 - \theta_3 \neq 2 \pi k$ for $k \in \mathbb{Z}$ for $k \in \mathbb{Z}$ is still necessary, but not sufficient.
    Regardless, the points at which aliasing could occur are still countably many and thus represent a measure zero set of possible choices, meaning that generically aliasing should not be a significant issue.
    And, like the the case of $\bT_{2,\ell}$, when extending to general $\bT_{3,\ell}$, increasing the lags can provide more diverse information that addresses the aliasing problem.
    
    For $m > 3$, the determinant of \eqref{eq:Generalized_Vandermonde_Matrix} only becomes more complicated since the non-uniform sampling adds more conditions involving rational functions in combinatorial fashion.
\end{IEEEproof}
\medskip

Although the above conjecture may be difficult to prove for larger $m$ and $\ell$ values, we will show that it is possible to derive explicit analytical results for an upper bound--but unfortunately not for the more important lower bound--on the scaled determinant of $\bT$ \eqref{eq:Generalized_Vandermonde_Matrix} for all $\ell$, which will at least ensure we avoid $\bT$ being non-singular from having unbounded singular values.
In addition, we can establish positive results in the asymptotic regime as $\ell \to \infty$ for the scaled determinant of $\bT$, its condition number and its rank.
Finally, we perform numerical simulations to generate empirical results as another means of supporting the conjecture.
\medskip

\begin{definition}\label{def:Pseudo_Transfinite_Diameter}
    Let $\bT \in \C^{\ell \times m}$ be a generalized Vandermonde matrix as in \eqref{eq:Generalized_Vandermonde_Matrix}. We say the \textit{pseudo-transfinite diameter} is:
    \begin{align}\label{eq:Pseudo_Finite_Diameter}
        \tilde{d}_m(\bT) = \left(\det\left(\left(\sqrt{\frac{m}{\ell}} \bT^H\right) \left(\sqrt{\frac{m}{\ell}} \right) \bT \right)^{1/2}\right)^{1/\binom{m}{2}}
    \end{align}
    If we define $\bS = \left(\sqrt{\frac{m}{\ell}} \bT^H\right)\left(\sqrt{\frac{m}{\ell}} \bT\right)$, then $\tilde{d}_m(\bT) = \left(\det(\bS)^{1/2}\right)^{1/\binom{m}{2}}$.
\end{definition}
\bigskip

The intuition of why $\sqrt{m/\ell}$ is an appropriate scaling factor for $\bT$ in the above definition relates to the total ``energy" of the matrix, which is connected to the Frobenius norm.
The Frobenius norm of a matrix $\bT \in \C^{\ell \times m}$ with entries $T_{pk}$ is related to the sum of its squared entries and the trace of $\bT^H \bT$: 
\begin{align}
    \|\bT\|_F^2 = \sum_{p = 1}^m \sum_{k = 1}^\ell T_{pk}^2 = \tr(\bT^H \bT)
\end{align}
The Frobenius norm of $\sqrt{m/\ell} \bT$ is then: 
\begin{align}
    \left\|\sqrt{\frac{m}{\ell}} \bT \right\|_F^2 = \sum_{p = 1}^m \sum_{k = 1}^\ell \frac{m}{\ell} T_{pk}^2 = \sum_{p = 1}^m m \left(\frac{1}{\ell}\sum_{k = 1}^\ell T_{pk}^2\right)
\end{align}
In effect, the scaling factor $\sqrt{m/\ell}$ averages the total energy contribution of the $T_{pk}^2$ terms over the number of lags $\ell$ used and rescales that average by $m$ to be comparable to the behavior of the case when $\ell = m$.

To analyze $\bT$, we have the following lemma about the entries of $\bS$.
\medskip

\begin{lemma}\label{lem:A_Gerschgorin}
    Let $\bS = (m/\ell) \bT^H \bT$ with $\bT$ as in \eqref{eq:Generalized_Vandermonde_Matrix} and entries $s_{pq}$ with $p,q\in \{1,...,m\}$.
    Then the entries of $\bS$ are:
    \begin{align}
        s_{pp} = m \qquad s_{pq} = \frac{m}{\ell} \sum_{k = 1}^\ell e^{-i(\theta_p - \theta_q) \tau_k}
    \end{align}
    and the radii of the Gerschgorin discs of $\bS$ are:
    \begin{align}\label{eq:Gerschgorin_Rad}
        R_p := \sum_{q = 1, p \neq q}^m |s_{pq}| = \sum_{q = 1, p \neq q}^m \left|\frac{m}{\ell} \sum_{k = 1}^\ell e^{-i(\theta_p - \theta_q)\tau_k} \right| 
    \end{align}
\end{lemma}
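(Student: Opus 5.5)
The lemma is a direct entrywise computation of $\bS = (m/\ell)\bT^H\bT$, followed by reading off the Gerschgorin radii from Theorem \ref{thm:Gerschgorin}. The plan is to compute a generic entry $s_{pq}$ as an inner product of two columns of $\bT$, then specialize to the diagonal.

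First, fix the convention that $z_p = e^{i\theta_p}$ with $\theta_p \in (-\pi,\pi]$. The entry $z_p^{\tau_k}$ is then taken on the branch $z_p^{\tau_k} := e^{i\theta_p \tau_k}$, consistent with how $\bT$ arises from the delay-coordinate map. Column $p$ of $\bT$ is therefore $\bT_{:,p} = [e^{i\theta_p\tau_1},\dots,e^{i\theta_p\tau_\ell}]^T$. Since $\theta_p$ and $\tau_k$ are real, conjugation gives $(z_p^{\tau_k})^* = e^{-i\theta_p\tau_k}$. Hence
\begin{align*}
s_{pq} = \frac{m}{\ell}\,\bT_{:,p}^H \bT_{:,q} = \frac{m}{\ell}\sum_{k=1}^\ell e^{-i\theta_p\tau_k} e^{i\theta_q\tau_k} = \frac{m}{\ell}\sum_{k=1}^\ell e^{-i(\theta_p-\theta_q)\tau_k}.
\end{align*}
Setting $q = p$ makes every summand equal to $|e^{i\theta_p\tau_k}|^2 = 1$, so $s_{pp} = \frac{m}{\ell}\cdot \ell = m$.

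Second, substituting these off-diagonal entries into the definition $R_p = \sum_{q\neq p}|s_{pq}|$ from Theorem \ref{thm:Gerschgorin} yields \eqref{eq:Gerschgorin_Rad} verbatim.

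The only point requiring care is the branch of $z^{\tau}$ for non-integer $\tau$. If a different branch were chosen for each entry, then $|z_p^{\tau_k}| = 1$ would still hold, so $s_{pp} = m$ is unaffected. The off-diagonal formula, however, relies on writing $z_p^{\tau_k} = e^{i\theta_p\tau_k}$ with the same principal angle $\theta_p$ for all $k$. I would state this principal-branch convention explicitly at the start; the rest is routine.
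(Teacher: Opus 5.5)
Your proof is correct and follows essentially the same route as the paper: you compute $s_{pq}$ as the inner product of columns $p$ and $q$ of $\bT$, set $q=p$ to get $s_{pp}=m$, and substitute into the definition of $R_p$. Fixing the branch $z_p^{\tau_k}=e^{i\theta_p\tau_k}$ explicitly is a small improvement, since the paper's step $\bar{z}_p^{\tau_k} z_p^{\tau_k} = (\bar{z}_p z_p)^{\tau_k}$ silently relies on exactly that convention for non-integer $\tau_k$.
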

\bigskip

\begin{IEEEproof}
    For $\bS = (m/\ell) \bT^H \bT$ with $\bT$ as in \eqref{eq:Generalized_Vandermonde_Matrix}, the diagonal entries of $\bS$, denoted by $s_{pp}$, are:
    \begin{align}
        s_{pp} = \frac{m}{\ell} \sum_{k = 1}^\ell \bar{z}_p^{\tau_k} z_p^{\tau_k} = \frac{m}{\ell}  \sum_{k = 1}^\ell \left(\bar{z}_p z_p \right)^{\tau_k}
    \end{align}
    When $z_p = e^{i \theta_p}$, then $\bar{z}_p = e^{-i \theta_p}$, $\bar{z}_p z_p = 1$, in which case $s_{pp} = (m/\ell) \cdot \ell = m$. The off-diagonal entries of $\bS$ are:
    \begin{align}
        s_{pq} = \frac{m}{\ell} \sum_{k = 1}^\ell \bar{z}_p^{\tau_k} z_q^{\tau_k} = \frac{m}{\ell} \sum_{k = 1}^\ell e^{-i(\theta_p - \theta_q) \tau_k}
    \end{align}
    Summing over the $q$s in each row excluding the diagonal terms yields the expression for $R_p$.
\end{IEEEproof}
\medskip

We prove in the next theorem that the pseudo-transfinite diameter as in Definition \ref{def:Pseudo_Transfinite_Diameter} is bounded above and that the bound converges to $1$ at a sub-linear rate based on the number of points $m$. 
This ensures the generalized determinant of $\bT$ is bounded and that $\bT$ cannot be non-singular due to unbounded singular values.
The result applies for any choice of lag $\ell$.
\medskip

\begin{theorem}\label{thm:Gen_Vand_Mat_Det_Bound}
    Let $\bT \in \C^{\ell \times m}$ be as in \eqref{eq:Generalized_Vandermonde_Matrix}. 
    Consider points $z_p = e^{i \theta_p}$ for $p \in \{1,...,m\}$ where the $\{\theta_p\}_{p = 1}^m \in \R$ are distinct.
    Let $\{\tau_k\}_{k = 1}^\ell > 0$ be a set of distinct powers.
    Then, 
    \begin{align}
        \tilde{d}_m(\bT) \le m^{1/(m-1)}
    \end{align}
    and $\tilde{\tau}(\bT) := \lim_{m \to \infty} \tilde{d}_m(\bT) = 1$.
\end{theorem}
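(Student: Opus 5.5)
The plan is to bound $\det(\bS)$ from above by Hadamard's inequality (equivalently, AM--GM on the eigenvalues), using the entries of $\bS$ computed in Lemma \ref{lem:A_Gerschgorin}. The matrix $\bS = (m/\ell)\bT^H\bT$ is Hermitian positive semidefinite, so its eigenvalues $\lambda_1,\dots,\lambda_m$ are real and nonnegative. By Lemma \ref{lem:A_Gerschgorin}, every diagonal entry is $s_{pp}=m$, so $\tr(\bS)=m^2$. This holds for any $\ell$ and any distinct powers $\{\tau_k\}$, because $|z_p^{\tau_k}|=1$ regardless of the branch chosen for the non-integer power.

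Next, bound the determinant. If $\bS$ is singular, the bound is trivial, so assume $\lambda_p>0$. By Hadamard's inequality for positive semidefinite matrices,
\begin{align*}
\det(\bS)\le \prod_{p=1}^m s_{pp} = m^m .
\end{align*}
Equivalently, AM--GM gives $\det(\bS)=\prod_p\lambda_p\le (\tr(\bS)/m)^m = m^m$. Hence $\det(\bS)^{1/2}\le m^{m/2}$. Raising this to the power $1/\binom{m}{2} = 2/(m(m-1))$ gives
\begin{align*}
\tilde{d}_m(\bT) \le \left(m^{m/2}\right)^{2/(m(m-1))} = m^{1/(m-1)} .
\end{align*}
Gerschgorin's theorem (Theorem \ref{thm:Gerschgorin}) is not needed for the upper bound, since the trace argument already controls all the eigenvalues at once.

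For the limit, note that $\log\big(m^{1/(m-1)}\big) = \log m/(m-1)\to 0$, so the upper bound tends to $1$. This yields $\limsup_{m\to\infty}\tilde{d}_m(\bT)\le 1$.

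The main obstacle is the claim $\tilde{\tau}(\bT)=1$ itself, which also requires a matching lower bound, $\liminf \tilde{d}_m \ge 1$. Without the conjecture, $\det(\bS)$ could be zero, for instance if $m>\ell$ or under aliasing. I would therefore state this part under the hypotheses of Conjecture \ref{conj:Gen_Vand_Rank} with $\ell\ge m$, so that $\det(\bS)>0$. Even then, a uniform lower bound such as $\det(\bS)\ge c^{m^2}$ would be needed to force the $1/\binom{m}{2}$ root to $1$; a Gerschgorin lower bound $\lambda_{\min}\ge m-\max_p R_p$ would give this only if the sums $|\frac1\ell\sum_k e^{-i(\theta_p-\theta_q)\tau_k}|$ are small. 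Absent such control, I expect the honest rigorous content to be the upper bound together with $\limsup\le 1$. The equality $\tilde{\tau}(\bT)=1$ would then be interpreted as the limit of the bound, and I would note this caveat explicitly.
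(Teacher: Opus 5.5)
Your argument is essentially the paper's. The paper also obtains $\tr(\bS)=m^2$ from $s_{pp}=m$ and then applies the GM--QM inequality to the singular values of $\bT$, which is exactly your AM--GM on the eigenvalues $\lambda_p(\bS)=(m/\ell)\,\sigma_p(\bT)^2$ (Hadamard gives the same bound here because the diagonal entries are equal).

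Your caveat about the limit is also correct. The paper's proof likewise shows only that the bound $m^{1/(m-1)}$ tends to $1$, i.e.\ $\limsup_{m\to\infty}\tilde{d}_m(\bT)\le 1$, with no matching lower bound. Indeed $\tilde{d}_m(\bT)=0$ whenever $m>\ell$, so the equality $\lim_{m\to\infty}\tilde{d}_m(\bT)=1$ is not established as stated.
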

\bigskip

\begin{IEEEproof}
    For a given $\bT \in \C^{\ell \times m}$, let $\bS = \left(\sqrt{\frac{m}{\ell}} \bT^H\right)\left(\sqrt{\frac{m}{\ell}} \bT\right)$.
    By Lemma \ref{lem:A_Gerschgorin}, $s_{pp} = m$.
    Then:
    \begin{align*}
        \tr(\bS) = \sum_{p = 1}^m s_{pp} = \sum_{p = 1}^m m = m^2
    \end{align*}
    Since $\bS = (m/\ell) \bT^H \bT$, $\tr(\bS) = \left(m/\ell\right) \tr(\bT^H \bT)$, which, when combined with the above expression, yields:
    \begin{align}\label{eq:Gen_Vand_Tr}
        \tr(\bT^H \bT) = m \ell
    \end{align}
    Next, we consider $\tilde{d}_m(\bT)$, beginning with $\det(\bS)^{1/2}$. 
    (Note that when $m = \ell$, $\det(\bS)^{1/2} = \det(\bT)$, as expected, and, when $\bT$ is as in \eqref{eq:Vandermonde_Matrix}, \eqref{eq:Pseudo_Finite_Diameter} is equivalent to \eqref{eq:Finite_Diameter} as desired.) 
    For the generalized case where $m \neq \ell$ and $\bT$ is as in \eqref{eq:Generalized_Vandermonde_Matrix}:
    \begin{align} \label{eq:Gen_Vand_Det_1}
    \begin{split}
        \det(\bS)^{1/2} &= \det \left(\left(\frac{m}{\ell}\right) \bT^H \bT \right)^{1/2} \\
        &= \left(\frac{m}{\ell}\right)^{m/2} \left(\prod_{p = 1}^m \lambda_p(\bT^H \bT)\right)^{1/2} \\
        &= \left(\frac{m}{\ell}\right)^{m/2} \prod_{p = 1}^m \sigma_p(\bT)
    \end{split}
    \end{align}
    By the Geometric Mean - Quadratic Mean inequality, applicable to a set of real numbers greater than 0, which is certainly true for the set of singular values $\{\sigma_p(\bT)\}_{p = 1}^m$:
    \begin{align} \label{eq:Gen_Vand_Det_GM_QM}
        \prod_{p = 1}^m \sigma_p(\bT) \le \left(\frac{1}{m}\sum_{p = 1}^m \sigma_p(\bT)^2\right)^{m/2}
    \end{align}
    Combining \eqref{eq:Gen_Vand_Det_1} and \eqref{eq:Gen_Vand_Det_GM_QM}:
    \begin{align*}
        \tilde{d}_m(\bT) \le \left(\frac{m}{\ell} \times \frac{1}{m}\sum_{p = 1}^m \sigma_p(\bT)^2\right)^{1/(m-1)}
    \end{align*}
    Since $\sum_{p = 1}^m \sigma_p(\bT)^2 = \tr(\bT^H \bT)$,
    \begin{align*}
        \tilde{d}_m(\bT) \le \left(\frac{1}{\ell} \tr(\bT^H \bT)\right)^{1/(m-1)}
    \end{align*}
    With $\tr(\bT^H \bT) = m \ell$ from \eqref{eq:Gen_Vand_Tr},
    \begin{align*}
        \tilde{d}_m(\bT) \le m^{1/(m-1)}
    \end{align*}
    As $m \to \infty$, $m^{1/(m-1)} \to 1$, meaning the pseudo-transfinite diameter is bounded above by $1$ in the limit, and the rate of convergence is at least related to $m^{1/(m-1)}$, which is $O\left(\frac{\ln(m)}{m}\right)$.
    While the bound on the convergence rate is sub-linear in terms of $m$--the number of frequencies or dimensions present in the system, importantly the result does converge and it does so independently of the number of lags $\ell$ used. 
    The only inequality in the proof was in \eqref{eq:Gen_Vand_Det_GM_QM}, so any improvement on the bound would require a tighter inequality.
\end{IEEEproof}
\medskip

\begin{remark}
    If $\bT$ was normalized not by $\sqrt{m/\ell}$ but instead by $1/\sqrt{\ell}$, the above bound would be $1$ for all $m > 1$. 
    This would provide a uniform upper bound and would match the Fourier Vandermonde matrices considered by \cite{Ryan_2009}, but may not scale the ``energy" to be comparable to a regular square Vandermonde matrix.
\end{remark}
\bigskip

Let the delays $\{\tau_k\}_{k = 1}^\ell$ be independent and identically-distributed (i.i.d.) random variables, and suppose, for consistency with the evenly-sampled setting, that the interval of interest for the particular distribution of each $\tau_k$ for $k \in \{1,...,\ell\}$ is mostly contained in $[0,\ell]$. 
To analyze the behavior of $R_p$ \eqref{eq:Gerschgorin_Rad} as $\ell \to \infty$, we first define an empirical measure:
\begin{align}
    \mu_\ell(\tau) := \frac{1}{\ell} \sum_{k = 1}^\ell \delta_{\tau_k}(\tau)
\end{align}
where $\delta$ here is the delta function. In this case:
\begin{align*}
    \frac{1}{\ell}\sum_{k = 1}^\ell e^{-i (\theta_p - \theta_q) \tau_k} = \int e^{-i(\theta_p - \theta_q) \tau} \dd \mu_\ell(\tau)
\end{align*}
For a random variable $\tau$, define the function $g(\tau):= e^{-i(\theta_p - \theta_q) \tau}$.
The Strong Law of Large Numbers \cite{Chung_2000} states that for every bounded measurable function $g$,
\begin{align}
    \int g(\tau) \dd \mu_\ell(\tau) = \frac{1}{\ell} \sum_{k = 1}^\ell g(\tau_k) \overset{a.s.}{\longrightarrow} \int g(\tau) \dd \mu(\tau)
\end{align}
meaning the empirical measures $\mu_\ell$ converge almost surely in the weak sense to the measure $\mu$, defined on the domain $\Omega$.
Since $g(\tau) = e^{-i(\theta_p - \theta_q) \tau}$ is a bounded measurable function,
\begin{align}\label{eq:LLN}
    \int e^{-i(\theta_p - \theta_q) \tau} \dd \mu_\ell(\tau) \overset{a.s.}{\longrightarrow} \int e^{-i(\theta_p - \theta_q) \tau} \dd \mu(\tau)
\end{align}
For convenience, define $\gamma_{pq} := \theta_p - \theta_q$. 
Then, considering the Gerschgorin radii of $\bS = \frac{m}{\ell} \bT^H \bT$ from \eqref{eq:Gerschgorin_Rad} and using \eqref{eq:LLN}:
\begin{align}
\begin{split}
    R_{p,\infty} &:= \lim_{\ell \to \infty} R_p \\
    &= \lim_{\ell \to \infty} m \sum_{q = 1, p \neq q}^m \left| \frac{1}{\ell} \sum_{k = 1}^\ell e^{-i \gamma_{pq} \tau_k} \right| \\
    &= m \sum_{q = 1, p \neq q}^m \left| \lim_{\ell \to \infty}  \frac{1}{\ell} \sum_{k = 1}^\ell e^{-i \gamma_{pq} \tau_k} \right| \\
    &= m \sum_{q = 1, p \neq q}^m \left| \int_{\Omega} e^{-i \gamma_{pq} \tau} \dd \mu(\tau) \right|
\end{split}
\end{align}

If the random variable $\tau$ is distributed according to the probability density function (PDF) $f(\tau)$ on a set $\Omega$, then
\begin{align} \label{eq:R_p_inf}
\begin{split}
    R_{p,\infty} &= m \sum_{q = 1, p \neq q}^m \left| \int_\Omega e^{-i \gamma_{pq} \tau} f(\tau) \dd \tau \right| \\
    &= m \sum_{q = 1, p \neq q}^m \left| \mathbb{E}[e^{-i \gamma_{pq} \tau}]\right|
\end{split}
\end{align}

By Gerschgorin's circle theorem (Theorem \ref{thm:Gerschgorin}), it follows that all the eigenvalues of $\bS$ are contained in the interval:
\begin{align}\label{eq:A_Gerschgorin_Spectrum}
    \lambda(\bS) \in \left[\min_{p \in \{1,...,m\}} \{s_{pp} - R_{p,\infty}\}, \max_{p \in \{1,...,m\}} \{s_{pp} + R_{p,\infty} \}\right]
\end{align}
With $\min_{p \in \{1,...,m\}} \{s_{pp}\} = m$, $\max_{p \in \{1,...,m\}} \{s_{pp}\} = m$, 
\begin{align}
    \lambda_{\min}(\bS) &\ge m \left(1 - \max_{p \in \{1,...,m\}} \sum_{q = 1, p \neq q}^m \left| \mathbb{E}[e^{-i \gamma_{pq} \tau}]\right| \right) \label{eq:Lam_S_min} \\ 
    \lambda_{\max}(\bS) &\le m \left(1 + \max_{p \in \{1,...,m\}} \sum_{q = 1, p \neq q}^m \left| \mathbb{E}[e^{-i \gamma_{pq} \tau}]\right| \right) \label{eq:Lam_S_max}
\end{align}
\medskip

\begin{theorem}\label{thm:Gen_Van_Mat_Det_Gerschgorin}
    Let $\bT \in \C^{\ell \times m}$ be a generalized Vandermonde matrix as in \eqref{eq:Generalized_Vandermonde_Matrix}. 
    Suppose that $\max_{p \in \{1,...m\}} R_{p,\infty} = 0$ with $R_{p,\infty}$ as in \eqref{eq:R_p_inf}. 
    Then,
    \begin{enumerate}
        \item $\lim_{\ell \to \infty} \rank(\bT) = m$;
        \item $\lim_{\ell \to \infty} \tilde{d}_m(\bT) = m^{1/(m-1)}$;
        \item $\lim_{\ell \to \infty} \kappa \left(\sqrt{\frac{m}{\ell}} \bT \right) = 1$.
    \end{enumerate}
\end{theorem}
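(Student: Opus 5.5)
The plan is to show that, almost surely, the normalized Gram matrix $\bS = \frac{m}{\ell}\bT^H\bT$ converges entrywise to $m$ times the identity as $\ell \to \infty$. All three claims should then follow by continuity. By Lemma \ref{lem:A_Gerschgorin}, the diagonal entries satisfy $s_{pp} = m$ exactly, for every $\ell$. For the off-diagonal entries, the Strong Law of Large Numbers argument leading to \eqref{eq:LLN} gives
\begin{align*}
s_{pq} = \frac{m}{\ell}\sum_{k=1}^\ell e^{-i\gamma_{pq}\tau_k} \overset{a.s.}{\longrightarrow} m\,\mathbb{E}[e^{-i\gamma_{pq}\tau}].
\end{align*}
The hypothesis $\max_p R_{p,\infty} = 0$ makes each summand in \eqref{eq:R_p_inf} vanish, because they are nonnegative. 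Hence $\mathbb{E}[e^{-i\gamma_{pq}\tau}] = 0$ for every pair $p \neq q$. Since there are only finitely many pairs $(p,q)$, the intersection of the almost-sure events is still almost sure. On that event, the finite-$\ell$ Gerschgorin radii $R_p(\ell)$ from \eqref{eq:Gerschgorin_Rad} tend to $0$ for every $p$.

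Next I would apply Theorem \ref{thm:Gerschgorin} to $\bS$ at finite $\ell$, using the finite-$\ell$ radii rather than the limits $R_{p,\infty}$. Because $\bS$ is Hermitian positive semidefinite, its eigenvalues are real and satisfy
\begin{align*}
m - \max_p R_p(\ell) \le \lambda_{\min}(\bS) \le \lambda_{\max}(\bS) \le m + \max_p R_p(\ell).
\end{align*}
Both bounds tend to $m$, so every eigenvalue of $\bS$ converges to $m$. The three items then follow.

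For (1), fix $\epsilon < m$. There is an $\ell_0$ such that $\lambda_{\min}(\bS) \ge m-\epsilon > 0$ for all $\ell \ge \ell_0$. For those $\ell$, $\bS$ is nonsingular, so $\rank(\bT) = \rank(\bT^H\bT) = m$, and the limit of the rank is $m$.

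For (2), I would use continuity of the product of eigenvalues. We get $\det(\bS) = \prod_p \lambda_p(\bS) \to m^m$, and therefore
\begin{align*}
\tilde{d}_m(\bT) = \det(\bS)^{1/(m(m-1))} \to m^{m/(m(m-1))} = m^{1/(m-1)}.
\end{align*}
This shows that the upper bound of Theorem \ref{thm:Gen_Vand_Mat_Det_Bound} is attained in the limit. Equivalently, equality is approached in the GM--QM step \eqref{eq:Gen_Vand_Det_GM_QM}.

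For (3), the singular values of $\sqrt{m/\ell}\,\bT$ are the square roots of the eigenvalues of $\bS$. Hence
\begin{align*}
\kappa\left(\sqrt{\tfrac{m}{\ell}}\bT\right) = \sqrt{\lambda_{\max}(\bS)/\lambda_{\min}(\bS)} \le \sqrt{\frac{m + \max_p R_p(\ell)}{m - \max_p R_p(\ell)}} \to 1,
\end{align*}
and since $\kappa \ge 1$ always, the limit is exactly $1$.

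The step I expect to need the most care is the passage from the limiting radii to the finite-$\ell$ spectrum. The bounds \eqref{eq:Lam_S_min}--\eqref{eq:Lam_S_max} are written with $R_{p,\infty}$, but Gerschgorin's theorem is only valid at each finite $\ell$. So I would state explicitly that the limits are almost-sure limits of the finite-$\ell$ radii, and then use the Gerschgorin sandwich above rather than a "limit of matrices" argument.

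I would also make two further points explicit. First, the rank is integer-valued, so "$\lim \rank(\bT) = m$" means that the rank is eventually equal to $m$. Second, all three conclusions hold almost surely with respect to the i.i.d. sampling of the $\tau_k$, and $m$ is held fixed while $\ell \to \infty$.
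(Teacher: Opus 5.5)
Your proposal is correct and follows the same route as the paper. You apply Gerschgorin to $\bS = \frac{m}{\ell}\bT^H\bT$ with $s_{pp}=m$ and vanishing radii, which forces every eigenvalue to $m$; the full rank, the limit $m^{1/(m-1)}$ for $\tilde{d}_m(\bT)$, and $\kappa \to 1$ then follow. The one difference favors you: the paper plugs $R_{p,\infty}$ directly into \eqref{eq:Lam_S_min}--\eqref{eq:Lam_S_max}, which literally asserts $\lambda_{\min}(\bS) \ge m$ at finite $\ell$, whereas you apply Gerschgorin with the finite-$\ell$ radii and then take the almost-sure limit, which is the rigorous form of that step.
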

\bigskip

\begin{IEEEproof}
    If $\max_{p \in \{1,...,m\}} R_{p,\infty} = 0$, then by \eqref{eq:Lam_S_min}, $\lambda_{\min}(\bS) \ge m$, and by \eqref{eq:Lam_S_max}, $\lambda_{\max}(\bS) \le m$. 
    So, all $m$ eigenvalues of $\bS$ converge to $m$. 
    Since $0 < m < + \infty$, all $m$ eigenvalues are nonzero, so $\rank(\bS) = m$, and, since $\rank(\bT) = \rank(\bT^H \bT) = \rank(\bS)$, $\rank(\bT) = m$.
    Next, the determinant is the product of the eigenvalues, so $\det(\bS) = m^m$, and, using \eqref{def:Pseudo_Transfinite_Diameter}, 
    \begin{align*}
        \tilde{d}_m(\bT) 
        = \left(\det(\bS)^{1/2}\right)^{1/\binom{m}{2}} 
        = m^{1/(m-1)}
    \end{align*} 
    Thus, in the asymptotic regime as $\ell \to \infty$, the upper bound on the determinant derived from Theorem \ref{thm:Gen_Vand_Mat_Det_Bound} is sharp.
    Finally, 
    \begin{align}
    \begin{split}
        \lim_{\ell \to \infty} \kappa \left(\sqrt{\frac{m}{\ell}} \bT \right) 
        = \frac{\sigma_{\max} \left(\sqrt{\frac{m}{\ell}} \bT \right)}{\sigma_{\min}\left(\sqrt{\frac{m}{\ell}} \bT \right)} 
        = \left(\frac{\lambda_{\max} (\bS)}{\lambda_{\min} (\bS)}\right)^{1/2} 
        = 1
        \end{split}
    \end{align}
\end{IEEEproof}
\bigskip

\begin{theorem}\label{thm:Gen_Vand_Mat_Cond_Bound}
    Let $\bT \in \C^{\ell \times m}$ be as in \eqref{eq:Generalized_Vandermonde_Matrix}. 
    Consider $z_p = e^{i \theta_p}$ for $p \in \{1,...,m\}$ with $\{\theta_p\}_{p = 1}^m \in \R$ distinct.
    Let $\{\tau_k\}_{k = 1}^\ell > 0$ be a set of distinct powers.
    Then, provided 
    \begin{align}\label{eq:Cond_Eps_Val}
        \max_{p \in \{1,...,m\}} \sum_{q = 1, p \neq q}^m \left| \mathbb{E}[e^{-i \gamma_{pq} \tau}]\right| \le \varepsilon < 1
    \end{align}
    the scaled condition number has the upper bound:
    \begin{align}\label{eq:Gen_Vand_Mat_Cond_Bound}
        \tilde{\kappa}_m \left(\sqrt{\frac{m}{\ell}} \bT \right) := \kappa_m \left(\sqrt{\frac{m}{\ell}} \bT \right)^{1/\binom{m}{2}} \le \left(\frac{1 + \varepsilon}{1 - \varepsilon}\right)^{\frac{1}{m(m-1)}}
    \end{align}
\end{theorem}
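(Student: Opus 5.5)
The plan is to reuse the Gerschgorin bounds \eqref{eq:Lam_S_min}--\eqref{eq:Lam_S_max} on $\bS = (m/\ell)\bT^H\bT$, with the hypothesis \eqref{eq:Cond_Eps_Val} inserted directly, and then pass from eigenvalues of $\bS$ to singular values of $\sqrt{m/\ell}\,\bT$. Throughout I read the statement, as the surrounding text does, in the regime where the Gerschgorin radii $R_p$ are replaced by their limits $R_{p,\infty}$. In other words, the bound is an asymptotic (almost-sure, $\ell \to \infty$) statement, and the expectations are over the i.i.d.\ delay distribution.

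First, by Lemma \ref{lem:A_Gerschgorin} every diagonal entry is $s_{pp}=m$. By \eqref{eq:R_p_inf}, $R_{p,\infty} = m\sum_{q\neq p}|\mathbb{E}[e^{-i\gamma_{pq}\tau}]| \le m\varepsilon$ for every $p$. Since $\bS$ is Hermitian positive semidefinite its eigenvalues are real, so Theorem \ref{thm:Gerschgorin} gives
\begin{align*}
m(1-\varepsilon) \le \lambda_{\min}(\bS) \le \lambda_{\max}(\bS) \le m(1+\varepsilon).
\end{align*}
Because $\varepsilon<1$, the lower bound is strictly positive. Hence $\bS$ is nonsingular, $\rank(\bT)=m$, and the condition number is well defined.

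Second, the singular values of $\sqrt{m/\ell}\,\bT$ are the square roots of the eigenvalues of $\bS$. Therefore
\begin{align*}
\kappa\left(\sqrt{\tfrac{m}{\ell}}\bT\right) = \left(\frac{\lambda_{\max}(\bS)}{\lambda_{\min}(\bS)}\right)^{1/2} \le \left(\frac{1+\varepsilon}{1-\varepsilon}\right)^{1/2},
\end{align*}
and the factors of $m$ cancel. Raising both sides to the power $1/\binom{m}{2} = 2/(m(m-1))$, which preserves the inequality since both sides are at least $1$, gives exactly the exponent $\frac{1}{2}\cdot\frac{2}{m(m-1)} = \frac{1}{m(m-1)}$ in \eqref{eq:Gen_Vand_Mat_Cond_Bound}.

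The main obstacle is justifying that the limiting radii $R_{p,\infty}$ may replace the finite-$\ell$ radii $R_p$. I would handle this as in the derivation preceding Theorem \ref{thm:Gen_Van_Mat_Det_Gerschgorin}, and there are two routes. The first is to state the result for $\ell\to\infty$: the strong law of large numbers in \eqref{eq:LLN}, together with continuity of $|\cdot|$ and the finite sum over $q$, gives $R_p \to R_{p,\infty}$ almost surely. The eigenvalues of $\bS$ depend continuously on its entries, so the bound holds in the limit. The second route is finite but large $\ell$: for any $\varepsilon'$ with $\varepsilon<\varepsilon'<1$, almost surely $R_p \le m\varepsilon'$ eventually, so the bound holds with $\varepsilon'$. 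Either way, the algebraic step itself is routine.
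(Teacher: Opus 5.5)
Your argument is correct and is essentially the paper's proof. Like the paper, you insert \eqref{eq:Cond_Eps_Val} into the Gerschgorin bounds \eqref{eq:Lam_S_min}--\eqref{eq:Lam_S_max} to get $m(1-\varepsilon)\le\lambda_{\min}(\bS)\le\lambda_{\max}(\bS)\le m(1+\varepsilon)$, take the square root of the eigenvalue ratio, and raise it to the power $1/\binom{m}{2}$. Two points are extra care that the paper's one-line proof leaves implicit: you explicitly justify replacing the finite-$\ell$ radii $R_p$ by their limits $R_{p,\infty}$, and you correctly attribute the square-root formula to $\kappa(\sqrt{m/\ell}\,\bT)$ rather than to $\kappa(\bS)$.
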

\bigskip

\begin{IEEEproof}
    Provided \eqref{eq:Cond_Eps_Val} is satisfied, then from \eqref{eq:Lam_S_max} $\lambda_{\max}(\bS) \le m(1 + \varepsilon)$ and from \eqref{eq:Lam_S_min} $\lambda_{\min} \ge m(1- \varepsilon)$. 
    Since $\kappa(\bS) = \left(\frac{\lambda_{\max}(\bS)}{\lambda_{\min}(\bS)}\right)^{1/2}$, by normalizing with the power $1/\binom{m}{2}$, the result follows.
\end{IEEEproof}
\medskip

The conclusions of Theorem \ref{thm:Gen_Van_Mat_Det_Gerschgorin} rest on the assumption that $\max_{p \in \{1,...,m\}}R_{p,\infty} = 0$. 
To show this is reasonable, we compute $R_{p,\infty}$ for different time-sampling schemes: (1) the uniform distribution, to simulate time-delays that are on average uniformly-spaced and which we would expect to behave like the deterministic evenly-spaced sampling scheme; (2) the exponential distribution, to stimulate delays across different orders of magnitude, potentially like in multi-scale modeling; and (3) the normal distribution.
\smallskip

\subsubsection{Uniform Distribution}
Suppose the delays $\{\tau_k\}_{k = 1}^\ell$ are i.i.d. uniform random variables, $\tau \sim \mathcal{U}[0, \ell]$ with $\Omega := [0, \ell]$. 
\begin{align}
    \mathbb{E}[e^{-i \gamma_{pq} \tau}] &= \int_\Omega e^{-i \gamma_{pq} \tau} f(\tau) \dd \tau 
    = \frac{1}{\ell} \int_{0}^{\ell} e^{-i \gamma_{pq} \tau} \dd \tau \nonumber \\
    &= \frac{1}{\ell} \frac{i}{ \gamma_{pq}} e^{-i \gamma_{pq} \ell/2} \left(e^{-i \gamma_{pq} \ell/2} -e^{i \gamma_{pq} \ell/2}\right) \nonumber \\
    &= \frac{2}{\ell \gamma_{pq}} e^{-i \gamma_{pq} \ell/2} \sin\left( \frac{\gamma_{pq} \ell}{2} \right)
\end{align}
Taking the modulus of the expected value nullifies the exponential oscillatory behavior, leaving:
\begin{align}\label{eq:Uniform_Expect}
    |\mathbb{E}[e^{-i \gamma_{pq} \tau}]| = \left|\sin\left(\frac{\gamma_{pq} \ell}{2}\right) \cdot \left(\frac{\gamma_{pq} \ell}{2}\right)^{-1}\right| = \left|\mbox{sinc}\left(\frac{\gamma_{pq} \ell}{2}\right) \right|
\end{align}
As $\ell \to \infty$, $|\mathbb{E}[e^{-i \gamma_{pq} \tau}]| \to 0$ \eqref{eq:Uniform_Expect} based on the behavior of the $\mbox{sinc}(\cdot)$ function. 
Then, $R_{p,\infty}$, as a finite linear combination of functions that converge to zero as $\ell \to \infty$ also converges to zero, the conclusions of Theorem \ref{thm:Gen_Van_Mat_Det_Gerschgorin} hold.
\smallskip

\subsubsection{Exponential Distribution}
Suppose now that the delays $\{\tau_k\}_{k = 1}^\ell$ are i.i.d. exponential random variables, $\tau \sim \mbox{Exp}(\lambda)$, where the PDF is $f(\tau;\lambda) = \lambda e^{-\lambda \tau}$, valid for $\tau \ge 0$ and with rate parameter $\lambda > 0$. 
To fit $1 - \varepsilon$ of the distribution in the interval $[0,\ell]$, pick $\lambda = \frac{-\ln(\varepsilon)}{\ell}$.
Then,
\begin{align}
    \mathbb{E}[e^{-i \gamma_{pq} \tau}] &= \int_{0}^\infty e^{-i \gamma_{pq} \tau} \lambda e^{-\lambda \tau} \dd \tau
    = \frac{\lambda}{\lambda + i \gamma_{pq}}
\end{align}
Taking the modulus of the above, we have:
\begin{align}
    |\mathbb{E}[e^{-i \gamma_{pq} \tau}]| 
    = |\lambda|/\sqrt{\lambda^2 + \gamma_{pq}^2}
\end{align}
Substituting $\lambda = \frac{-\ln(\varepsilon)}{\ell}$ and taking the limit as $\ell \to \infty$,
\begin{align}
    \lim_{\ell \to \infty} |\lambda|/\sqrt{\lambda^2 + \gamma_{pq}^2} = \lim_{\ell \to \infty} \frac{-\ln(\varepsilon)}{\sqrt{\ln(\varepsilon)^2 + \ell^2 \gamma_{pq}^2}} = 0
\end{align}
and thus $R_{p,\infty} \to 0$ and the conclusions of Theorem $\ref{thm:Gen_Van_Mat_Det_Gerschgorin}$ apply.
\smallskip

\subsubsection{Normal Distribution}
If the delays $\{\tau_k\}_{k = 1}^{\ell}$ are i.i.d. normally-distributed random variables, $\tau \sim \cN(\mu, \sigma^2)$, we can pick $\mu = \ell/2$ and $\sigma^2 = \alpha \ell^2$ for some $\alpha > 0$ such that a high percentage of the distribution is contained within $[0,\ell]$. 
Of course, unlike a truncated normal distribution, this would allow, albeit with low probability, negative sampling times, which would not be realistic.
However, for the purpose of analysis, it is sufficiently reasonable.
Then,
\begin{align}
    \mathbb{E}[e^{-i \gamma_{pq} \tau}] &= \int_{-\infty}^\infty e^{-i \gamma_{pq} \tau} \frac{1}{\sqrt{2 \pi \sigma^2}} \exp\left(- \frac{(\tau-\mu)^2}{2 \sigma^2}\right) \dd \tau \nonumber \\
    &= \frac{1}{\sqrt{2 \pi \sigma^2}} \int_{-\infty}^\infty e^{-i \gamma_{pq} \tau - \frac{(\tau- \mu)^2}{2 \sigma^2}} \dd \tau \nonumber \\
    &= \frac{1}{\sqrt{2 \pi \sigma^2}} \exp(-i \mu \gamma_{pq} - \frac{1}{2} \sigma^2 \gamma_{pq}^2) \label{eq:Normal_Expect}
\end{align}
Taking the modulus of \eqref{eq:Normal_Expect} cancels oscillations, and by \eqref{eq:R_p_inf}:
\begin{align}
    R_{p,\infty} &= m \sum_{q = 1, p \neq q}^m \frac{1}{\sqrt{2 \pi \sigma^2}} \exp(- \frac{1}{2} \sigma^2 \gamma_{pq}^2)
\end{align}
When $\ell \to \infty$, $\sigma^2 \to \infty$ too and the distribution escapes to infinity so that the CDF for any finite value is zero, in which case $R_{p,\infty} \to 0$ for all $p \in \{1,...,m\}$, thereby leading to the conclusions of Theorem \ref{thm:Gen_Van_Mat_Det_Gerschgorin}.
\smallskip

\begin{figure}[!t]
    \centering
    \includegraphics[width=3.5in]{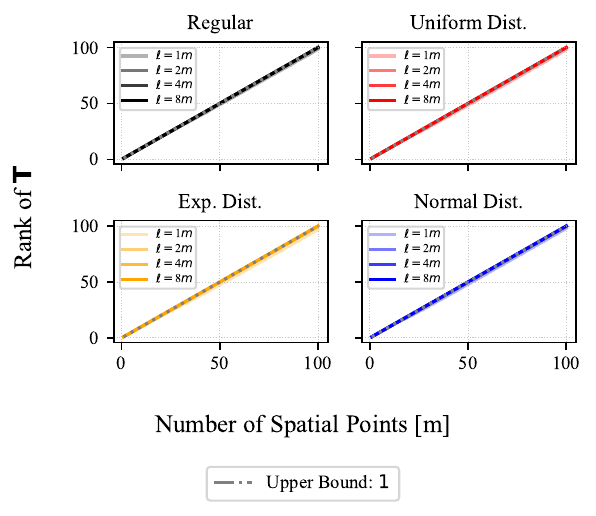}
    \caption{The rank of generalized Vandermonde matrix \eqref{eq:Generalized_Vandermonde_Matrix} in relation to number of frequencies $m$. 
    As $\ell \to \infty$, $\bT$ is full rank. 
    For small $\ell$, the normal and exponential distributions exhibit a slight deviation from full rank, likely a numerical artifact from the severely ill-posed nature of Vandermonde matrices.}
    \label{fig:Gen_Vand_Mat_Rank}
\end{figure}

\begin{figure}[!t]
    \centering
    \includegraphics[width=3.5in]{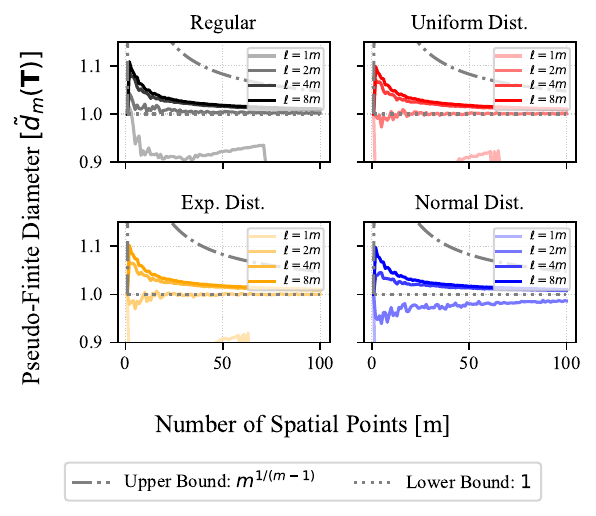}
    \caption{The scaled determinant \eqref{eq:Pseudo_Finite_Diameter} of the generalized Vandermonde matrix \eqref{eq:Generalized_Vandermonde_Matrix} is shown for different ratios of the number of lags to the number of sample points.
    In each case when $\ell = m$, whether the sampling is deterministic or random, the determinant is less than $1$, and its value drops suddenly around $m = 60$, most likely due to a numerical artifact stemming from the ill-posed nature of Vandermonde matrices.
    Around $\ell = 2m$, the determinant tends to oscillate around and converge towards $1$ as $m$ increases, especially for the uniformly and exponentially-distributed sampling schemes. 
    When $\ell \gg m$, all the distributions have a scaled determinant greater than $1$ that, in the limit as $m \to \infty$, converge to $1$.}
    \label{fig:Gen_Vand_Mat_Det}
\end{figure}

\begin{figure}[!t]
    \centering
    \includegraphics[width=3.5in]{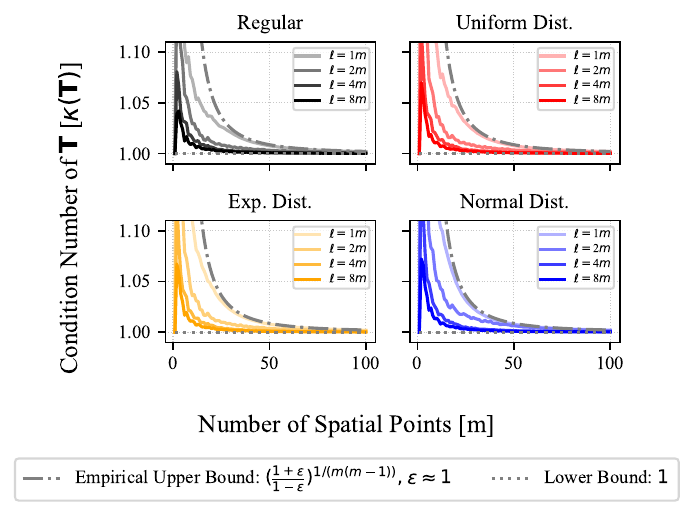}
    \caption{The scaled condition number of the generalized Vandermonde matrix \eqref{eq:Generalized_Vandermonde_Matrix}. 
    Increasing $\ell/m$ correlates with the scaled condition number for generalized Vandermonde matrices converging to $1$ faster in the deterministic case, and for random cases using uniform, normal, and exponential distributions.
    The upper bound for the condition number \eqref{eq:Gen_Vand_Mat_Cond_Bound} is empirically determined by finding a suitable $\varepsilon$, which in this case is $\varepsilon = 1-10^{-9}$. 
    Decreasing $\varepsilon$ by orders of magnitude generates tighter bounds for higher $\ell/m$ ratios.}
    \label{fig:Gen_Vand_Mat_Cond}
\end{figure}

\subsubsection{Numerical Simulations}
To empirically test the propositions in Conjecture \ref{conj:Gen_Vand_Rank}, we generated random conjugate pairs of frequencies $\{\pm \theta_j\}_{j = 1}^d$ for $d \in \{1,...,50\}$ drawn uniformly from $[0,\pi]$. 
For each choice of $d$, we performed $50$ iterations to generate a representative average and variance for the results.
To find the rank, scaled determinant, and condition number of $\sqrt{m/\ell} \bT$, we computed the singular values of the matrix using the SVD.
The rank is the number of nonzero singular values; the condition number is the ratio of the largest to the smallest singular value; and the scaled determinant is the product of the singular values normalized by $1/\binom{m}{2}$.

Figure \ref{fig:Gen_Vand_Mat_Rank} illustrates the rank of $\bT$; Figure \ref{fig:Gen_Vand_Mat_Det} depicts the normalized determinant; and Figure \ref{fig:Gen_Vand_Mat_Cond} shows the normalized condition number for different numbers of points $m$ in the regime where $\ell \to \infty$.
For each property, the plot compares the result for the generalized Vandermonde matrix for the 3 chosen PDFs (uniform, exponential, and normal) with that of the normal Vandermonde matrix, which is the baseline.
Empirically, $\bT$ is always full rank, and the normalized condition number and rank of the generalized Vandermonde matrices converge to $1$ as $m \to \infty$.
The normalized determinant upper bound of $m^{1/(m-1)}$ is an overestimate for when $\ell \ll +\infty$.
Thus, empirical results seem to suggest that generalized Vandermonde matrices tend to behave similarly to regular Vandermonde matrices with consecutive integer powers.

\subsection{Non-Uniformly-Sampled Takens' Embedding Theorem}

If Conjecture \ref{conj:Gen_Vand_Rank} is true, we can establish two theorems about delay-coordinate maps consisting of non-uniformly-sampled time delays.
Just as Theorem \ref{thm:Stable_Takens_Embedding_1} establishes the \textit{existence} of a stable embedding for a delay-coordinate map with evenly-spaced lags for linear dynamical systems, so too can we establish the \textit{existence} of a stable embedding for delay-coordinate maps of non-uniformly-spaced lags in the following theorem.
\medskip

\begin{theorem} \label{thm:Nonuniform_Stable_Takens_Embedding_1}
    (Linear Takens' Embedding with Non-Uniform Sampling) Consider a linear dynamical system of class $\cA(d)$ in $\R^n$ in steady-state. 
    Let $\{\tau_k\}_{k = 1}^\ell > 0$ be distinct times, $\bh \in \R^n$ be the observation function, and $\bPsi$ the delay-coordinate map as in \eqref{eq:Nonuniform_Delay_Coordinate_Map}. 
    Suppose that $\ell \ge 2d$, the $\cA_{\bPhi}$-eigenvalues $\{e^{\pm i \theta_j \tau_k}\}$ are distinct and purely imaginary, and $\bv_j^H \bh \neq 0$ for all $j \in \{1,...,d\}$. 
    Then, for all distinct pairs of points $\bx_1, \bx_2 \in \cM$, with probability one $\bPsi$ satisfies \eqref{eq:Stable_Embedding} for some constants $C$ and $\delta < 1$.
\end{theorem}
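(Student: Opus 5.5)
We reduce the embedding inequality \eqref{eq:Stable_Embedding} to a statement about the singular values of $\bG = \bT\bH$, following the proof of Theorem \ref{thm:Stable_Takens_Embedding_1} in \cite{Yap_2011}, and invoke Conjecture \ref{conj:Gen_Vand_Rank} for the temporal factor.

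\textbf{Step 1 (reduction to coefficient space).} Take $\bx_1, \bx_2 \in \cM$ and write $\bx_j = \bV\balpha_j$ as in Definition \ref{def:Attractor}. Set $\Delta\balpha := \balpha_1 - \balpha_2 \neq 0$. By linearity of $\bPsi$ and the component-wise computation preceding the definition of $\bG$, we have $\bPsi(\bx_1) - \bPsi(\bx_2) = \bG\Delta\balpha$ and $\bx_1 - \bx_2 = \bV\Delta\balpha$. Since $\rank(\bV) = 2d$, the denominator is bounded by
\begin{align*}
A_1^2\|\Delta\balpha\|_2^2 \le \|\bV\Delta\balpha\|_2^2 \le A_2^2\|\Delta\balpha\|_2^2 .
\end{align*}
The numerator lies between $\sigma_{\min}(\bG)^2\|\Delta\balpha\|_2^2$ and $\sigma_{\max}(\bG)^2\|\Delta\balpha\|_2^2$. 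The ratio in \eqref{eq:Stable_Embedding} therefore lies in
\begin{align*}
\left[\frac{\sigma_{\min}(\bG)^2}{A_2^2}, \frac{\sigma_{\max}(\bG)^2}{A_1^2}\right].
\end{align*}
Choose $C$ to be the midpoint of this interval and $\delta$ to be its half-width divided by $C$. Then $\delta<1$ holds exactly when $\sigma_{\min}(\bG)>0$, i.e.\ when $\bG$ has full column rank $2d$. The upper end is automatically finite, since $\bG$ is a finite matrix.

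\textbf{Step 2 (full rank of $\bG$).} We write $\bG = \bT\bH$. The diagonal entries of $\bH$ are $\bv_j^T\bh = (\bv_j^H\bh)^*$ and $\bv_j^H\bh$, and both are nonzero by hypothesis. Hence $\bH$ is invertible and $\rank(\bG) = \rank(\bT)$. The matrix $\bT$ is exactly the generalized Vandermonde matrix \eqref{eq:Generalized_Vandermonde_Matrix} with $m = 2d$ nodes $z \in \{e^{\mp i\theta_j}\}_{j=1}^d \subseteq S^1$ and powers $\{\tau_k\}_{k=1}^\ell$. These powers are strictly positive and distinct by assumption.

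The nodes are distinct because the $\theta_j>0$ are distinct (Definition \ref{def:Class_Ad_Sys}). We need their angles to be distinct in $(-\pi,\pi]$. The hypothesis that the $\cA_{\bPhi}$-eigenvalues are distinct guarantees this; if necessary we rescale time so that $\theta_{\max} < \pi$. Conjecture \ref{conj:Gen_Vand_Rank} then gives, with probability one, $\rank(\bT) = \min\{2d,\ell\} = 2d$, using $\ell \ge 2d$. Thus $\sigma_{\min}(\bG)>0$ almost surely.

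\textbf{Step 3 (conclusion).} Combining the two steps, almost surely $0 < \sigma_{\min}(\bG) \le \sigma_{\max}(\bG) < \infty$. The constants $C$ and $\delta<1$ from Step 1 then work uniformly for every distinct pair $\bx_1,\bx_2\in\cM$, because they depend only on $\bG$ and $\bV$ and not on the pair.

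\textbf{Main obstacle.} Step 2 is the only nontrivial point. It rests entirely on the conjecture. We must also check that the conjecture's probabilistic hypotheses match the present ones: the randomness is over the $\tau_k$, or over generic $\theta_j$. The measure-zero exceptional sets described in the discussion of $\bT_{2,2}$ must be excluded, and the theorem's ``with probability one'' is inherited directly from that exclusion.
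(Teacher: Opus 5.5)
Your proposal is correct and takes essentially the same route as the paper. Conjecture~\ref{conj:Gen_Vand_Rank} gives $\rank(\bT)=2d$ for $\ell\ge 2d$, and $\bv_j^H\bh\neq 0$ with $\bh$ real gives $\rank(\bH)=2d$, so $\rank(\bG)=2d$; the stable-embedding conclusion then follows as in Theorem III.1 of \cite{Yap_2011}. You spell out what the paper delegates to that reference, namely the singular-value sandwich that yields $C$ and $\delta<1$. Your use of the invertibility of $\bH$, together with the time rescaling that places the angles $\pm\theta_j$ in $(-\pi,\pi]$, is a cleaner justification than the paper's null-space/range rank formula.
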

\bigskip

\begin{IEEEproof}
    If Conjecture \ref{conj:Gen_Vand_Rank} is true, then the result follows from a nearly identical approach to the proof of Theorem III.1 of \cite{Yap_2011}. 
    In particular, provided $\rank(\bT) = \min\{\ell, 2d\}$, then for $\ell \ge 2d$, $\rank(\bT) = 2d$, and, provided $\rank(\bH) = 2d$, then $\rank(\bG) = 2d$.
    This is because $\rank(\bG) = \rank(\bH) - \dim(\cN(\bT)\cap\cR(\bH))$, where $\cN(\bT)$ denotes the null-space of $\bT$ and $\cR(\bH)$ is the range of $\bH$;
    when $\rank(\bH) = 2d$, $\cR(\bH)$ is all of $\R^d$, and when $\rank(\bT) = 2d$, $\cN(\bT)$ is empty, leading to $\rank(\bG) = 2d$.
    Thus, when $\ell \ge 2d$, the entire attractor $\cM$ is embedded into the delay-coordinate space by $\bPsi$.
\end{IEEEproof}
\bigskip

Likewise, Theorem \ref{thm:Stable_Takens_Embedding_2}, which provides explicit enveloping bounds and long-term asymptotic bounds for the stability of linear delay-coordinate maps with uniformly-spaced time-delays \eqref{eq:Uniform_Delay_Coordinate_Map}, can be extended partially to the setting of delay-coordinate maps with non-uniformly-spaced delays \eqref{eq:Nonuniform_Delay_Coordinate_Map}.
Due to the unevenly-sampled delay structure of generalized Vandermonde matrices $\bT$, it is unknown whether explicit enveloping upper and lower bounds on the embedding quality can be derived.
What is possible is to capture the asymptotic bounds for stability as $\ell \to \infty$ to show that $\bPsi$ can approach isometry.
\medskip

\begin{theorem} \label{thm:Nonuniform_Stable_Takens_Embedding_2}
    (Stable Linear Takens' Embedding with Non-uniform Sampling; Theorem III.2 \cite{Yap_2011}) Consider a linear system of class $\cA(d)$ in $\R^n$ that is in steady-state. 
    Let $\{\tau_k\}_{k = 1}^{\ell} > 0$ be the sampling times, $\bh \in \R^n$ be the observation function such that $\|\bh\|_2^2 = \frac{2d}{\ell}$, and $\bPsi$ the delay-coordinate map with $\ell$ delays defined as in \eqref{eq:Nonuniform_Delay_Coordinate_Map}. 
    Suppose that $\ell \gg 1$, the $\cA_{\bPhi}$-eigenvalues $\{e^{\pm i \theta_j \tau_k}\}$ are distinct and purely imaginary, and $\bv_j^H \bh \neq 0$ for all $j \in \{1,...,d\}$. 
    Then, for all distinct pairs of points $\bx_1, \bx_2 \in \cM$, with probability one $\bPsi$ satisfies \eqref{eq:Stable_Embedding} in the limit as $\ell \to \infty$ with constants $C$ and $\delta \to \delta_0$:
    \begin{align}
        C:= d \left(\frac{\kappa_1^2}{A_2} + \frac{\kappa_2^2}{A_1}\right) \qquad \delta_0 := \frac{A_2 \kappa_2^2 - A_1 \kappa_1^2}{A_2 \kappa_2^2 + A_1 \kappa_1^2}
    \end{align}
\end{theorem}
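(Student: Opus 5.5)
The plan is to follow the structure of Yap and Rozell's proof of Theorem III.2, with the Gerschgorin bound for the uniform Fourier Vandermonde block replaced by the asymptotic analysis of $\bS = (m/\ell)\bT^H\bT$ from Theorem \ref{thm:Gen_Van_Mat_Det_Gerschgorin}, taking $m = 2d$.

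First write $\bx_1 - \bx_2 = \bV(\balpha_1 - \balpha_2) =: \bV\balpha$. Then $\bPsi(\bx_1) - \bPsi(\bx_2) = \bG\balpha = \bT\bH\balpha$, and the ratio in \eqref{eq:Stable_Embedding} is
\begin{align*}
\frac{\balpha^H \bH^H \bT^H \bT \bH \balpha}{\balpha^H \bV^H \bV \balpha}.
\end{align*}
Since $\|\bh\|_2^2 = 2d/\ell$, the entries of $\bH$ have moduli between $\kappa_1\sqrt{2d/\ell}$ and $\kappa_2\sqrt{2d/\ell}$. Hence $\bH^H\bT^H\bT\bH = \bH_0^H \bS \bH_0$, where $\bH_0 := \sqrt{\ell/2d}\,\bH$ has diagonal moduli in $[\kappa_1,\kappa_2]$ and $\bS = (2d/\ell)\bT^H\bT$ is exactly the matrix of Lemma \ref{lem:A_Gerschgorin} with $m = 2d$. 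The denominator lies between $A_1\|\balpha\|^2$ and $A_2\|\balpha\|^2$ (with the convention for $A_1, A_2$ used in \cite{Yap_2011}, i.e.\ as bounds on $\bV^H\bV$).

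Second, bound the numerator. For each fixed $\ell$, the eigenvalues of $\bS$ lie in $[2d - R, 2d + R]$ by Gerschgorin, where $R = \max_p R_p$. By the strong law of large numbers argument preceding \eqref{eq:R_p_inf}, applied to the finitely many bounded functions $e^{-i\gamma_{pq}\tau}$, we have almost surely $R_p \to R_{p,\infty}$. This gives the "with probability one" qualifier: a finite intersection of almost-sure events. Under the hypothesis $\max_p R_{p,\infty} = 0$, which was verified for the uniform, exponential and normal schemes and is where distinctness of the frequencies enters, Theorem \ref{thm:Gen_Van_Mat_Det_Gerschgorin} gives $\lambda(\bS) \to \{2d\}$. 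Conjecture \ref{conj:Gen_Vand_Rank} guarantees $\bT$ has full column rank for each finite $\ell \ge 2d$, so the ratio is well defined along the whole sequence. Consequently, for any $\epsilon > 0$ and all large $\ell$,
\begin{align*}
(2d)(1-\epsilon)\kappa_1^2\|\balpha\|^2 \le \balpha^H\bH_0^H\bS\bH_0\balpha \le (2d)(1+\epsilon)\kappa_2^2\|\balpha\|^2.
\end{align*}

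Third, combine the two bounds. The ratio lies in $\left[\frac{2d\kappa_1^2(1-\epsilon)}{A_2}, \frac{2d\kappa_2^2(1+\epsilon)}{A_1}\right]$. Writing the endpoints as $C(1\mp\delta)$ with $C$ equal to their midpoint gives, as $\epsilon \to 0$, $C = d(\kappa_1^2/A_2 + \kappa_2^2/A_1)$. It also gives
\begin{align*}
\delta \to \frac{\kappa_2^2/A_1 - \kappa_1^2/A_2}{\kappa_2^2/A_1 + \kappa_1^2/A_2} = \frac{A_2\kappa_2^2 - A_1\kappa_1^2}{A_2\kappa_2^2 + A_1\kappa_1^2} = \delta_0.
\end{align*}
The $\epsilon$-dependent part of $\delta$ plays the role of $\delta_1(\ell)$. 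It vanishes like the Gerschgorin radius, for instance like the sinc decay in the uniform case, which replaces the explicit $(2d-1)\nu/\ell$ term of Theorem \ref{thm:Stable_Takens_Embedding_2}.

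The main obstacle is the second step. The convergence $R_p \to R_{p,\infty}$ is almost sure but not uniform in the sampling distribution, and in the paper's setting the distribution of $\tau$ itself depends on $\ell$ (for example $\mathcal{U}[0,\ell]$), so the law of large numbers is being applied to a triangular array. To handle this I would first try a variance bound: $\var\bigl(\tfrac1\ell\sum_k e^{-i\gamma\tau_k}\bigr) \le 1/\ell$. This gives convergence in probability of each $s_{pq}$ to its expectation, which tends to zero. Then I would use a fourth-moment bound with Borel--Cantelli to upgrade to almost-sure convergence, since the fourth moment of the centered sum is $O(\ell^{-2})$, which is summable. That would rigorously justify that $\max_p R_p \to 0$ almost surely, and hence that $\delta \to \delta_0$ with probability one.
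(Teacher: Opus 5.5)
Your proof is correct and takes the same route as the paper. The paper's one-line proof reruns Yap and Rozell's argument for Theorem III.2 (the factorization $\bG=\bT\bH$, Rayleigh-quotient bounds through $\kappa_1,\kappa_2,A_1,A_2$, and Gerschgorin on $\bT^H\bT$) and simply discards $\delta_1(\ell)$. Your version actually justifies that step in two ways. First, you make explicit the hypothesis $\max_p R_{p,\infty}=0$, which the statement needs but omits: i.i.d.\ delays drawn from a fixed, $\ell$-independent law generically give $R_{p,\infty}>0$. Second, you handle the $\ell$-dependent triangular array with the fourth-moment/Borel--Cantelli bound.

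One small correction: Conjecture~\ref{conj:Gen_Vand_Rank} is not needed for this limiting statement. The denominator $\|\bx_1-\bx_2\|_2^2$ is positive regardless, and full column rank of $\bT$ for large $\ell$ already follows from $\lambda_{\min}(\bS)\to 2d$.
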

\medskip

\begin{IEEEproof}
    The proof follows from the same approach as Theorem III.2 in \cite{Yap_2011}, with the exception that we neglect the lag-dependent term $\delta_1(\ell)$ because we are considering only the asymptotic behavior of the embedding.
\end{IEEEproof}
\bigskip

The difficulty of establishing a $\delta_1(\ell)$-term in Theorem \ref{thm:Nonuniform_Stable_Takens_Embedding_2} equivalent to that of Theorem \ref{thm:Stable_Takens_Embedding_2} chiefly lies in the non-geometric nature of the non-uniform time series data.
Whereas uniformly-sampled delay coordinate maps generate a geometric sum of exponential terms when applying Gerschgorin's theorem to $\bS = (m/\ell)\bT^H \bT$ as seen in Lemma A.1 in \cite{Yap_2011}, the geometric structure is lost when using non-uniformly-sampled data, and the expression \eqref{eq:Gerschgorin_Rad} does not readily simplify.
\medskip

\begin{remark}
    To introduce an additional degree of non-uniform sampling in the linear setting beyond the lag intervals between each delay, we can also allow for the observation function to be time-dependent, similar to \cite{Huke_2007}.
    Consider a time-dependent observation function $\bh(t) \in \R^n$ for which $\bv_j^H \bh(t) \neq 0$ for all $j \in \{1,...,d\}$ and $t \in \R$.
    The resulting non-uniformly-sampled delay-coordinate map would be:
    \begin{align} \label{eq:Nonuniform_Delay_Coordinate_Map_2}
        \bPsi_{\{\bh(t), \bPhi, \{\tau_k\}_{k = 1}^\ell\}}(\bx(t))
        = \begin{bmatrix}
            \bh(t)^T \bPhi^{-\tau_1}\bx(t) \\ \bh(t)^T \bPhi^{-\tau_2} \bx(t) \\ \vdots \\ \bh(t)^T \bPhi^{-\tau_\ell} \bx(t)
        \end{bmatrix}
    \end{align}
    Suppose $\bh(t)$ is still normalized by requiring $\|\bh(t)\|_2^2 = \frac{2d}{\ell}$. 
    The time-dependence of the linear observation vector implies the measurement apparatus may be rotating or translating in time as it samples the dynamics, and the normalization ensures there is no stretching or shrinking. 
    This may well be applicable when using mobile sensors that move relative to the system being observed. 
    In the linear setting, the dynamics described by the embedding matrix $\bG(t)$ can still be decoupled into a Vandermonde matrix $\bT$ exactly as in \eqref{eq:Generalized_Vandermonde_Matrix} and a diagonal matrix $\bH(t)$ each element of which--$\bv_j^H \bh(t)$ and $\bv_j^T \bh(t)$--is now time-dependent.
    Consequently, the stable embedding bounds for the delay-coordinate map with mobile sensors will also be time-dependent.
    To make them time independent, the bounds must account for the behavior of $\bh(t)$ over time. Re-defining $\kappa_1$ and $\kappa_2$ as in \eqref{eq:kappa_vals} to be:
    \begin{align*}
        \kappa_1 = \min_{t \in \R} \min_{j \in \{1,...,d\}} \frac{|\bv_j^H \bh(t)|}{\|\bh(t)\|_2} \quad
        \kappa_2 = \max_{t \in \R} \max_{j \in \{1,...,d\}} \frac{|\bv_j^H \bh(t)|}{\|\bh(t)\|_2}
    \end{align*}
    would allow the conclusions of Theorem \ref{thm:Nonuniform_Stable_Takens_Embedding_1} and \ref{thm:Nonuniform_Stable_Takens_Embedding_2} to apply with these new definitions of $\kappa_1$, $\kappa_2$, and $\bh(t)$.
\end{remark}

\subsection{Numerical Simulations}

In this section, we implement simulated numerical experiments to assess whether delay-coordinate maps for linear systems with unevenly-spaced delays actually approach the asymptotic embedding quality $\delta_0$.
The approach to the experiments is mostly consistent with \cite{Yap_2011}.
We generate a dynamical system of ambient dimension $n = 50$ with attractor frequencies $\{\pm\theta_j\}_{j = 1}^d$, with $d = 4$ for our example system and where each $\theta_j$ is drawn from a random uniform distribution over $(0,2\pi)$.
Considering that a sufficient condition on the sampling frequency to guarantee that $\bPsi$ will embed the dynamics of the original system is to choose $\tau^* < \frac{\pi}{\theta_{\max}}$, we selected the sampling interval to be $\tau = 0.8 \tau^*$ for the deterministic, uniformly-sampled delay coordinate map, which serves as a baseline comparison for the non-uniformly-spaced case. (For the numerical simulations, $\tau = 0.9864$.)

For the delay-coordinate maps with unevenly-spaced delays, we generated the delays by initializing a length $\ell_{\max} = 500$ delay vector with entries drawn randomly from the following distributions: uniform, exponential, or normal.
For each $\ell \in \{1,...,\ell_{\max}\}$, a delay-coordinate map was generated using the first $\ell$ lags of the $\ell_{\max}$ total entries of the delay vector.
This ensured that each lag test was consistent, in that it preserved the previous time lags when generating a delay-coordinate embedding with additional delay coordinates.
\smallskip

\subsubsection{Uniform}
To ensure the distribution was contained in the interval $[0,\ell]$ for various choices of $\ell$, we drew a length $\ell_{\max}$ vector uniformly from $[0,1]$ and then scaled the first $\ell$ entries used for the particular iteration by $\ell$.
\smallskip

\subsubsection{Exponential}
Instead of using a typical exponential distribution, the domain of which is $[0,\infty)$, we used a truncated exponential distribution (using SciPy's \texttt{truncexpon} function) contained in the interval $[0,1]$ as a means to initialize a length-$\ell_{\max}$ vector the first $\ell$-elements we would scale by $\ell$ for the particular test iterate.
\smallskip

\subsubsection{Normal}
As opposed to using a typical normal distribution, we used a truncated normal distribution (using SciPy's \texttt{truncnorm} function) with $\mu = 0.5$ and $\sigma^2 = 0.2$ and with the lower and upper bounds being $0$ and $1$, respectively.
\smallskip

To empirically measure how stable a particular embedding is, we consider the embedding conditioning: 
\begin{align}\label{eq:Empirical_Q}
    Q(\bx_1, \bx_2) = \frac{\|\bPsi(\bx_1) - \bPsi(\bx_2)\|_2^2}{\|\bx_1 - \bx_2\|_2^2}
\end{align}
To empirically compute $Q(\bx_1, \bx_2)$ for a particular $\ell$, we perform $5,000$ trials comparing the difference of pairs of states $\bx_1$ and $\bx_2$ under the delay-coordinate map.
For each trial, we generate random pairs of points $\bx_1$ and $\bx_2$ from the same solution trajectory on the attractor.
To do so, we draw pairs of random sample times $t_{\bx_1}$ and $t_{\bx_2}$ uniformly from $[0,10000]$, and initialize the trajectory with the initial condition $\bx_0 = \bV \balpha_0$, where the coefficient vector $\balpha_0 = \begin{bmatrix}
    1 & 1 & \cdots & 1 & 1
\end{bmatrix} \in \C^{2d}$ to ensure that the chosen solution captures information on all $2d$ dimensions of the attractor.
By Definition \ref{def:Attractor}, the pairs of states are: $\bx_1(t) = \bV e^{\bLambda t_{\bx_1}} \balpha_0$ and $\bx_2(t) = \bV e^{\bLambda t_{\bx_2}} \balpha_0$.
For each $\ell$ ranging from $1$ to $500$, we identify the maximal and minimal embedding quality $\max\{Q\}$ and $\min\{Q\}$ to describe how the range of the embedding quality changes with the number of lags used in the delay-coordinate map.
We also compute the variance as another metric of the typical embedding quality.
\smallskip

\subsubsection{Case 1 (Ideal Dynamics and Ideal Measurement)}
Theorem \ref{thm:Nonuniform_Stable_Takens_Embedding_2} suggests that the delay-coordinate map is an isometry when the conditioning parameter $\delta_0 = 0$, which occurs when $A_1 = A_2$ and $\kappa_1 = \kappa_2$.
When $A_1 = A_2$, it implies that the underlying attractor $\cM$ is ``uniformly-stretched" in each dimension, and $\kappa_1 = \kappa_2$ implies that $\bh$ observes each of the dimensions of $\cM$ with the same magnitude.
To numerically simulate this idealized scenario as a way to confirm Theorem \ref{thm:Nonuniform_Stable_Takens_Embedding_2}, we ensure $A_1 = A_2$ by generating the $\cA_{\bPhi}$-eigenvectors with $\bv_j = \frac{\sqrt{2}}{2} (\be_{2j-1} + i \be_{2j})$, where $\{\be_j\}_{j = 1}^d$ are the canonical basis vectors in $\R^n$, so that $\bv_j$ and $\bv_j^H$ form a conjugate pair.

For an observation function that ensures $\kappa_1 = \kappa_2$, define $\bc = \bV \boldsymbol{1}$, where $\boldsymbol{1} = \begin{bmatrix}
    1 & 1 & \cdots & 1 & 1
\end{bmatrix}^T$, so that the vector observes the same magnitude of each dimension of the attractor; then, set $\bh = \sqrt{\frac{2d}{\ell}} \frac{\bc}{\|\bc\|_2}$ for the specific choice of $\ell$.

\begin{table}[!t]
    \caption{Case 1\label{tab:Case_1}}
    \centering
    \begin{threeparttable}
    
    \begin{tabular}{|c||c|c|c|c|}
        \hline
        $\cA$-Dimension Index: $j$ & 1 & 2 & 3 & 4 \\
        \hline
        $\theta_j$ & 2.5480 & 1.5158 & 1.6468 & 0.1489 \\
        \hline
        $|\bv_j^H \bh|^2/\|\bh\|_2^2$ & 1 & 1 & 1 & 1 \\
        \hline
        $\lambda_j(\bV^H \bV)$ & 1 & 1 & 1 & 1 \\
        \hline
    \end{tabular}

    \begin{tablenotes}
        \footnotesize
        \item $A_1 = 1$, $A_2 = 1$, $\kappa_1 = 1$, $\kappa_2 = 1$, $C = 1$, $\delta_0 = 0$, and $\nu = 15.4879$.
    \end{tablenotes}

    \end{threeparttable}
\end{table}

\begin{figure}[!t]
    \centering
    \includegraphics[width=3.5in]{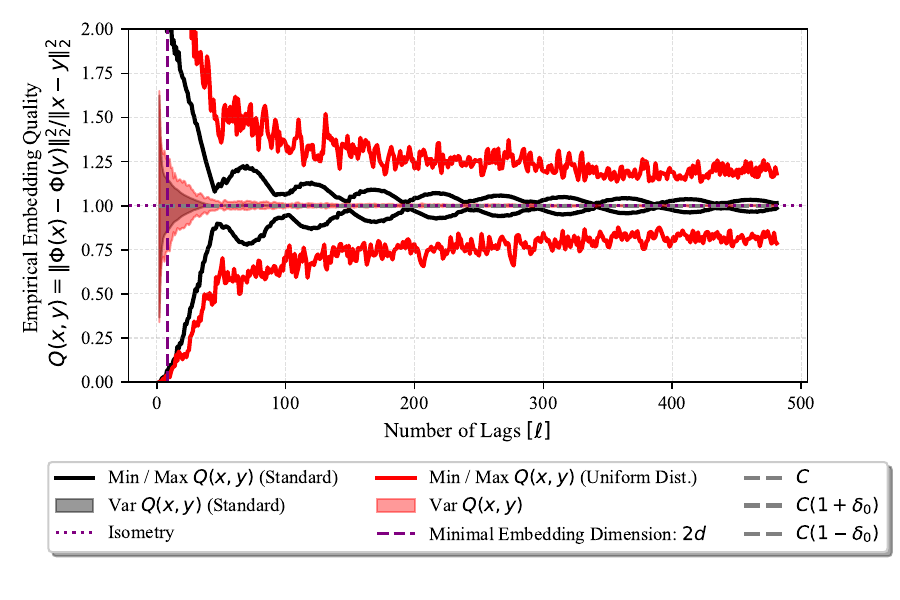}
    \caption{Numerical simulation of the conditioning of delay-coordinate maps with delays drawn uniformly from $[0,\ell]$ in the case when $A_1 = A_2$ and $\kappa_1 = \kappa_2$ (and thus $\delta_0 = 0$), compared to the deterministic delay-coordinate map with evenly-spaced delays and the theoretical bounds from Theorem \ref{thm:Nonuniform_Stable_Takens_Embedding_2}.
    In the asymptotic regime, the random and deterministic delay-coordinate maps approach isometry.
    While $\max\{Q\}$ and $\min\{Q\}$ can be quite large, by comparison $\mbox{var}\{Q\}$ is quite small.}
    \label{fig:Takens_Uniform_Case_1}
\end{figure}

\begin{figure}[!t]
    \centering
    \includegraphics[width=3.5in]{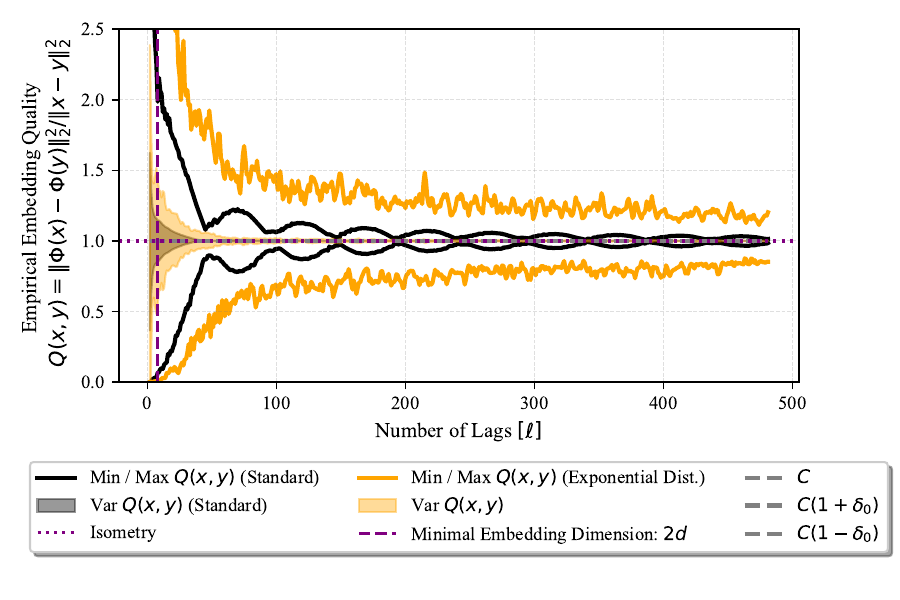}
    \caption{Numerical simulation for Case $1$, where $A_1 = A_2$ and $\kappa_1 = \kappa_2$, this time for delay-coordinate maps with delays distributed according to an exponential r.v. between $[0,\ell]$. 
    The maximal and minimal embedding quality begins to converge to $C$ as $\ell$ increases, potentially allowing for isometric time-delay embeddings.
    The variance of $Q$ is also quite low.}
    \label{fig:Takens_Exp_Case_1}
\end{figure}

\begin{figure}[!t]
    \centering
    \includegraphics[width=3.5in]{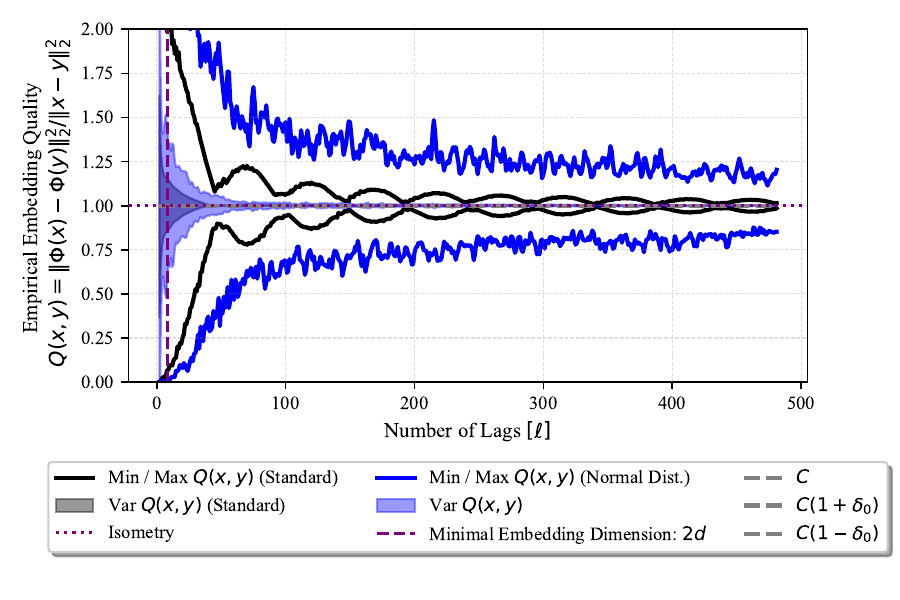}
    \caption{Numerical simulation for Case $1$, where $A_1 = A_2$ and $\kappa_1 = \kappa_2$, now where the delay-coordinate maps have normally-distributed delays on the truncated interval $[0,\ell]$.
    Again, the variance of $Q$ is low, and $Q$ converges to $C$ as $\ell$ increases.}
    \label{fig:Takens_Normal_Case_1}
\end{figure}

The simulation results are depicted in Figures \ref{fig:Takens_Uniform_Case_1}, \ref{fig:Takens_Exp_Case_1}, \ref{fig:Takens_Normal_Case_1}, and the parameters are listed in Table \ref{tab:Case_1}.
Each figure presents the embedding quality of a non-uniformly-sampled delay-coordinate map \eqref{eq:Nonuniform_Delay_Coordinate_Map}--uniform, exponential, or normal--in the form of statistics $\max\{Q(\bx_1,\bx_2)\}$, $\min\{Q(\bx_1, \bx_2)\}$, and $\mbox{var}\{Q(\bx_1,\bx_2)\}$ with $Q(\bx_1,\bx_2)$ as in \eqref{eq:Empirical_Q}, and compares the results against the uniformly-sampled delay-coordinate map \eqref{eq:Uniform_Delay_Coordinate_Map} as a baseline, as well as against the theoretical bounds for asymptotic embedding constants $C$, $C(1 + \delta_0),$ and $C(1-\delta_0)$.

Importantly, $\max\{Q\}$ and $\min\{Q\}$ do begin to converge to $C$ for each of the non-uniformly-sampled delay-coordinate maps, illustrating that increasing the number of time-delayed coordinates $\ell$ in the embedding can improve its quality, which seems consistent with Theorem \ref{thm:Nonuniform_Stable_Takens_Embedding_2}.
However, the non-uniformly-sampled maps consistently perform more poorly than the uniformly-spaced delay-coordinate map, at least in this set-up, and, without an enveloping bound generated by a $\delta_1(\ell)$, providing a rate of convergence is impossible.
\smallskip

\subsubsection{Case 2 (Ideal Dynamics and Non-Ideal Measurement)}
Next, we simulate the case where the attractor is still ``ideal" in the sense that $A_1 = A_2$, but now where the observation function $\bh$ does not observe each dimension of the attractor in the same way, meaning $\kappa_1 \neq \kappa_2$.
We generate the $\cA_{\bPhi}$-eigenvectors in the same manner as in Case $1$, but now for the observation function, we generate $\bc \in \R^n$ by letting $\bc = \sum_{j = 1}^d ((1 + w_{2j-1}) \Re(\bv_j) + (1 + w_{2j}) \Im(\bv_j))$, with the $\{w_j\}_{j = 1}^{2d}$ being i.i.d. Gaussian random variables of mean $0$ and variance $0.1$.
For each value $\ell$, we generate $\bh = \sqrt{\frac{2d}{\ell}} \frac{\bc}{\|\bc\|_2}$.
Since each $\{w_j\}_{j=1}^{2d}$ is of low variance, $\frac{|\bv_j^H \bh|}{\|\bh\|_2^2}$ is close to $1$, in which case $\kappa_1$ and $\kappa_2$ are close to but typically not equal to $1$.
With $A_1 = A_2$ and $\kappa_1$ and $\kappa_2$ close to $1$, $\delta_0$ is small.

\begin{table}[!t]
    \caption{Case 2\label{tab:Case_2}}
    \centering

    \begin{threeparttable}

    \begin{tabular}{|c||c|c|c|c|}
        \hline
        $\cM$-Dimension Index: $j$ & 1 & 2 & 3 & 4 \\
        \hline
        $\theta_j$ & 2.5480 & 1.5158 & 1.6468 & 0.1489 \\
        \hline
        $|\bv_j^H \bh|^2/\|\bh\|_2^2$ & 1.0409 & 1.0547 & 0.9339 & 0.9654\\
        \hline
        $\lambda_j(\bV^H \bV)$ & 1 & 1 & 1 & 1 \\
        \hline
    \end{tabular}

    \begin{tablenotes}
        \footnotesize
        \item $A_1 = 1$, $A_2 = 1$, $\kappa_1 = 0.9339$, $\kappa_2 = 1.0547$, $C = 0.7886$, $\delta_0 = 0.1210$, and $\nu = 15.4879$.
    \end{tablenotes}
    
    \end{threeparttable}
\end{table}

\begin{figure}[!t]
    \centering
    \includegraphics[width=3.5in]{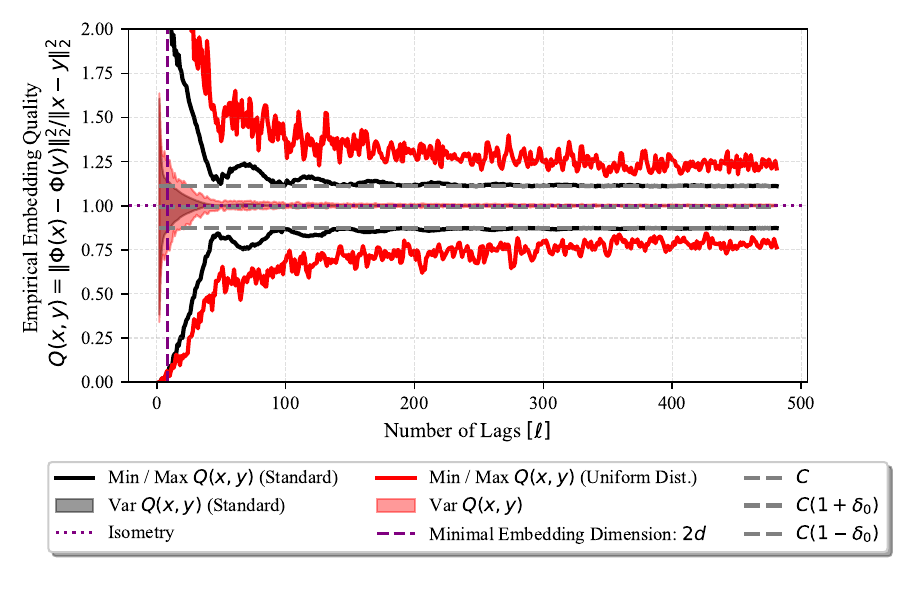}
    \caption{Numerical simulation of the conditioning of delay-coordinate maps with delays drawn uniformly from $[0,\ell]$ in the case when $A_1 = A_2$ and $\kappa_1 \neq \kappa_2$ (and thus $\delta_0 > 0$), with the deterministic delay-coordinate map with evenly-spaced delays as a reference and the bounds from Theorem \ref{thm:Nonuniform_Stable_Takens_Embedding_2}.
    As $\ell$ grows large, the random and deterministic maps approach isometry, and, although $\max\{Q\}$ and $\min\{Q\}$ can be large, $\mbox{var}\{Q\}$ is still relatively small.}
    \label{fig:Takens_Uniform_Case_2}
\end{figure}

\begin{figure}[!t]
    \centering
    \includegraphics[width=3.5in]{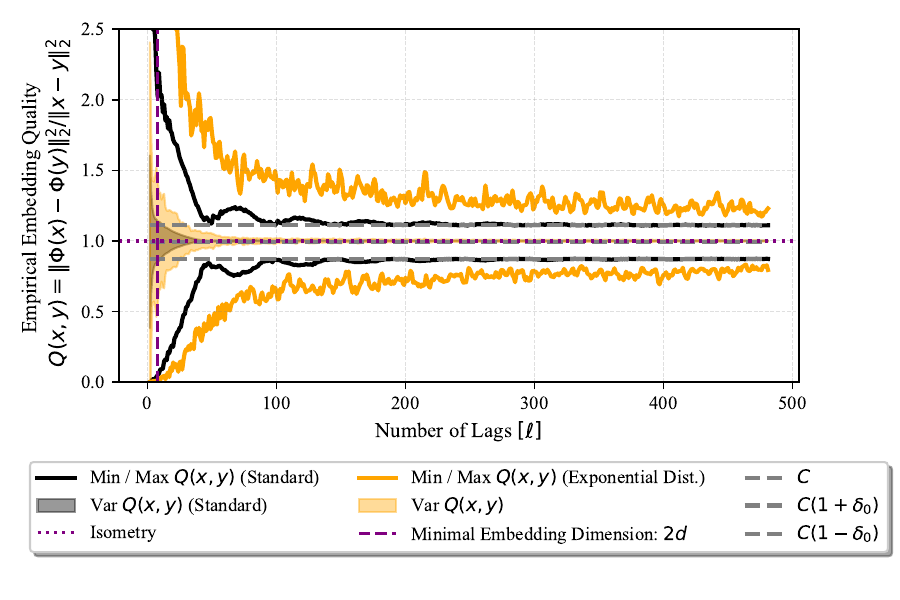}
    \caption{The stability of non-uniform linear Takens' embedding in the case where $A_1 = A_2$ but where $\kappa_1 \neq \kappa_2$ where the time delays adhere to an exponential distribution. 
    Again, the non-deterministically-generated map performs poorer than its deterministic, evenly-spaced counterpart, but as $\ell \to \infty$ they do begin to converge to the same asymptotic bounds. 
    Again $\var\{Q\}$ is comparably small to $\max\{Q\}$ and $\min\{Q\}$, so most of the embedded attractor is isometric to the original attractor.}
    \label{fig:Takens_Exp_Case_2}
\end{figure}

\begin{figure}[!t]
    \centering
    \includegraphics[width=3.5in]{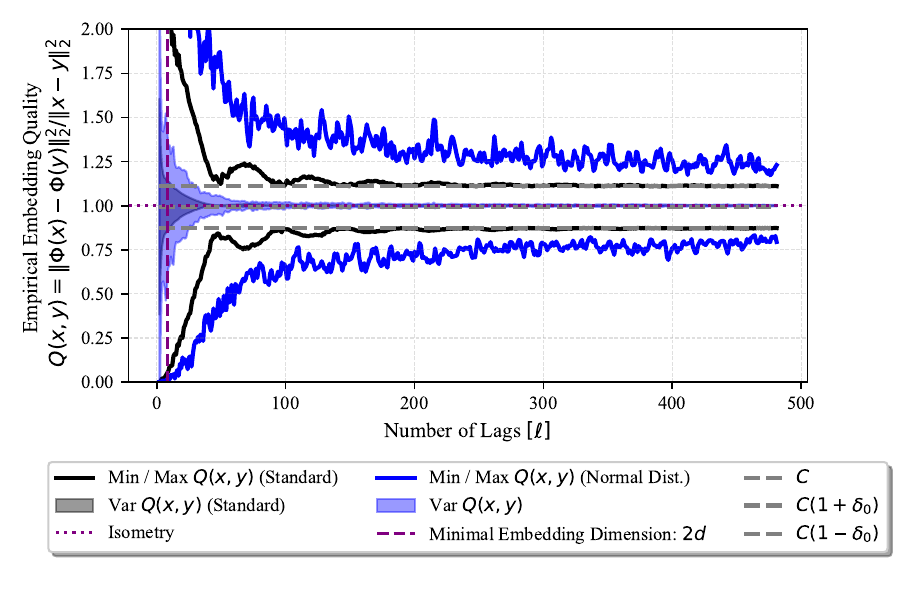}
    \caption{Stability of non-uniform Linear Takens' Embedding where time lags adhere to a normal distribution. Low variance}
    \label{fig:Takens_Normal_Case_2}
\end{figure}

Figures \ref{fig:Takens_Uniform_Case_2}, \ref{fig:Takens_Exp_Case_2}, \ref{fig:Takens_Normal_Case_2} depict the results, and Table \ref{tab:Case_2} contains the parameters.
Again, $\max\{Q\}$ and $\min\{Q\}$ do begin to converge to $C(1 \pm \delta_0)$ for each of the non-uniformly-sampled delay-coordinate maps, but because $\kappa_1 \neq \kappa_2$, there is a fundamental limit to the quality achievable to the time-delay embedding and it cannot achieve isometry.
This confirms that increasing the lags $\ell$ in the delay-coordinate map does improve the embedding quality, but up to a limit, an observation in agreement with Theorem \ref{thm:Nonuniform_Stable_Takens_Embedding_2}.
Again, the normal delay-coordinate map \eqref{eq:Nonuniform_Delay_Coordinate_Map} outperforms maps with non-uniform delays, and there is no theoretical enveloping bound for a $\delta_1(\ell)$ that could provide convergence guarantees.
\smallskip

\subsubsection{Case 3 (Non-Ideal Dynamics and Non-Ideal Measurement)}
Finally, we simulate the case when $\delta_0$ may be larger, which is when $A_1 \neq A_2$ and $\kappa_1 \neq \kappa_2$.
We use the same $\cA_{\bPhi}$-eigenvalues and the same $\bh$ as in Case $1$.
For the attractor dynamics, we let $\{\bv_j\} = \frac{(\ba_j + i \bb_j)}{\sqrt{\|\ba\|_2^2 + \|\bb\|_2^2}}$, where $\{\ba_j, \bb_j\} \in \R^n$ are randomly-constructed vectors the entries of which are normally-distributed with mean $0$ and variance $1$.

\begin{table}[!t]
    \caption{Case 3\label{tab:Case_3}}
    \centering

    \begin{threeparttable}

    \begin{tabular}{|c||c|c|c|c|}
        \hline
        $\cM$-Dimension Index: $j$ & 1 & 2 & 3 & 4 \\
        \hline
        $\theta_j$ & 2.5480 & 1.5158 & 1.6468 & 0.1489 \\
        \hline
        $|\bv_j^H \bh|^2/\|\bh\|_2^2$ & 0.6581 & 0.2853 & 0.1888 & 0.4264 \\
        \hline
        $\lambda_j(\bV^H \bV)$ & 2.0433 & 1.6724 & 1.5849 & 1.3529 \\
        \hline
        $\lambda_{j+4}(\bV^H \bV)$ & 0.9879 & 0.1735 & 0.0779 & 0.1072 \\
        \hline
    \end{tabular}

    \begin{tablenotes}
        \footnotesize
        \item $A_1 = 0.2790$, $A_2 = 1.4294$, $\kappa_1 = 0.1888$, $\kappa_2 = 0.6581$, $C = 0.7886$, $\delta_0 = 0.9684$, and $\nu = 15.4879$.
    \end{tablenotes}

\end{threeparttable}

\end{table}

\begin{figure}[!t]
    \centering
    \includegraphics[width=3.5in]{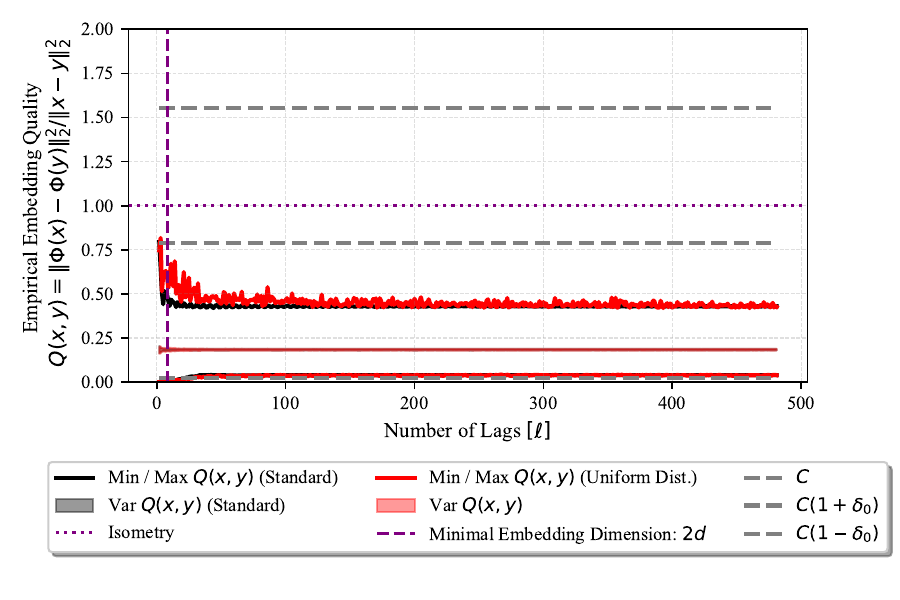}
    \caption{Simulation of the conditioning of delay-coordinate maps with delays drawn uniformly from $[0,\ell]$ when $A_1 \neq A_2$ and $\kappa_1 \neq \kappa_2$ ($\delta_0 > 0$).
    The deterministic map with evenly-spaced delays acts as a reference, and the theoretical bounds of Theorem \ref{thm:Nonuniform_Stable_Takens_Embedding_2} are plotted.
    The unevenly-sampled delay-coordinate map conditioning converges to the evenly-spaced case, with the normal mapping consistently outperforming it. 
    The empirical lower bound converges to $1 - \delta_0$, but the upper bound is not tight. 
    Without scaling by absorbing $C$ into $\bPhi$, the delay-coordinate map cannot be an isometric embedding.
    While the maximum and minimum quality range widely, the variance is small.}
    \label{fig:Takens_Uniform_Case_3}
\end{figure}

\begin{figure}[!t]
    \centering
    \includegraphics[width=3.5in]{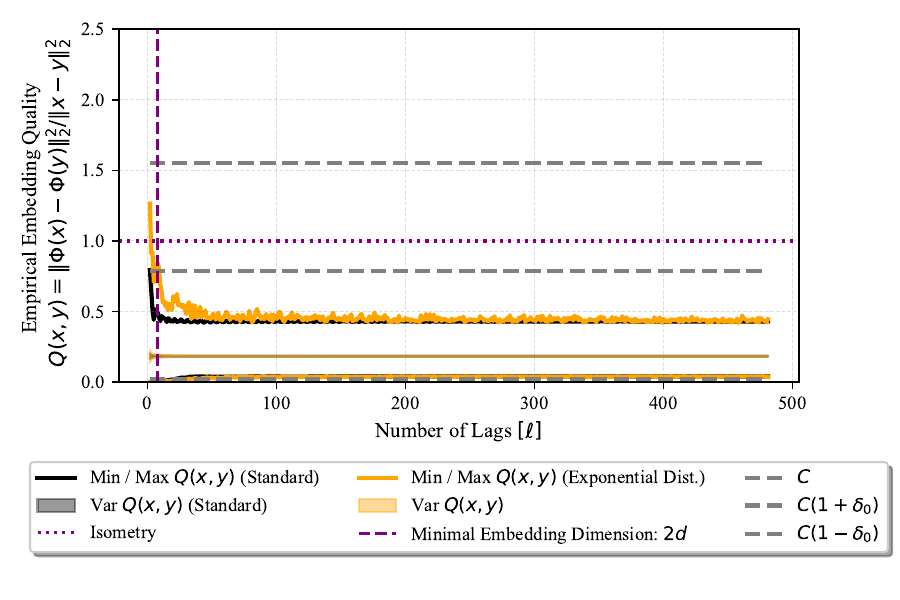}
    \caption{A numerical simulation similar to that of Figure \ref{fig:Takens_Uniform_Case_3}, with $A_1 \neq A_2$ and $\kappa_1 \neq \kappa_2$, except that the time lags used for the non-uniform delay-coordinate map are drawn from an exponential distribution.
    The embedding quality performs similarly to the map with deterministic, uniformly-spaced delays, as well as with the map with unevenly-spaced delays drawn uniformly.}
    \label{fig:Takens_Exp_Case_3}
\end{figure}

\begin{figure}[!t]
    \centering
    \includegraphics[width=3.5in]{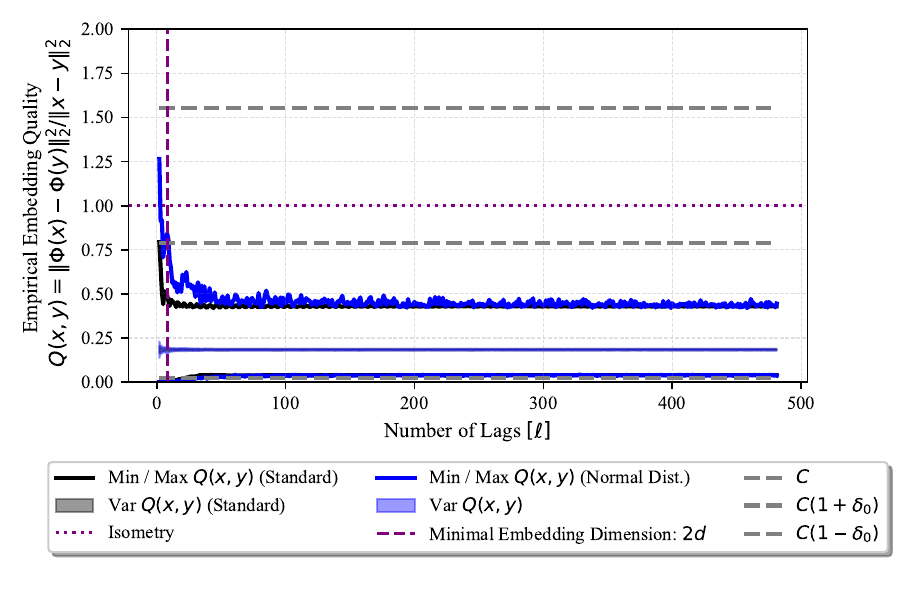}
    \caption{A numerical simulation like that of Figure \ref{fig:Takens_Uniform_Case_3}, but now with delays drawn from a normal distribution. 
    Again, because the eigenvectors of the underlying system are of differing magnitudes, and the difference of the magnitudes in the observed states varies greatly, the quality of the embedding is poor, yet it does not become singular.}
    \label{fig:Takens_Normal_Case_3}
\end{figure}

In this ``non-ideal" setting, the results for which are shown in Figures \ref{fig:Takens_Uniform_Case_3}, \ref{fig:Takens_Exp_Case_3}, \ref{fig:Takens_Normal_Case_3}, and the parameters for which are in Table \ref{tab:Case_3}, again the embedding is not isometric in the asymptotic case, although it could be improved by absorbing $C$ into $\bPhi$.
Again, the unevenly-spaced delay-coordinate map quality converges to that of the evenly-spaced delay-coordinate map, and the variance of the empirical quality is low.

Given that the deterministic delay-coordinate map with uniformly-spaced delays seems to empirically lower and upper bound the maximal and minimal values of the embedding quality $Q$ for the maps of non-uniformly-spaced delays, a few natural questions are: whether uniform-sampling tends to outperform non-uniform sampling schemes and in what kinds of applications and settings; or whether a better basis of forming delay-coordinate maps with non-uniform delays is needed to have a more suitable comparison to the uniform counterparts.
Another important question would be whether it is possible to establish estimates for an enveloping bound for the conditioning $\delta_1(\ell)$ so as to be able to offer guarantees of convergence and estimates of the rate of convergence.

\section{Conclusions}
We considered delay-coordinate maps constructed using unevenly-spaced time delays for linear systems evolving on their attractors and established that they can form stable embeddings.
To analyze the temporal component of the embeddings, we considered generalized Vandermonde matrices with distinct but unevenly-spaced powers \eqref{eq:Generalized_Vandermonde_Matrix}.
Just as having a distinct set of points and a distinct set of evenly-spaced integer powers are sufficient conditions for typical Vandermonde matrices \eqref{eq:Vandermonde_Matrix} to be full column rank when the number of lags exceeds the number of points, so too should having a distinct set of points and a distinct set of powers be sufficient for Vandermonde matrices with unevenly-spaced powers to be full column rank, but perhaps with the caveat that the statement would be one of high probability rather than a deterministic one.
As an initial step to confirm this, we proved theoretical results for when the number of time-delays $\ell$ is small and for the asymptotic regime as $\ell \to \infty$; we also provided empirical support for when such matrices are full rank.
Rigorously proving the conjecture that generalized Vandermonde matrices are full rank does remain an open problem, yet the results for the base cases, for the asymptotic case, and from the numerical simulations all support the proposition.

If generalized Fourier Vandermonde matrices with unevenly-spaced powers are full column rank with probability one if the system frequencies and time samples are distinct, then the theorems we proposed for the existence of stable embeddings using delay-coordinate maps of unevenly-spaced lags and for the explicit asymptotic bounds of the embedding stability hold true.
In the asymptotic regime where $\ell \to \infty$, the embedding quality of the delay-coordinate map generated for each of the chosen random time-sampling schemes converged to the same limiting asymptotic bound as that of the deterministic, uniformly-sampled delay-coordinate map.
This is to be expected in the linear setting, since, as shown in the deterministic case, the limiting behavior is dependent not on the number of time lags, but exclusively on the coordinates of the system being studied and the choice of observation vector.
Thus, just as increasing the number of lags used in a delay-coordinate map can mitigate the ill-conditioning of the embedding for a uniformly-sampled signal, so too in general does increasing the number of time lags in the non-uniform sampling setting improve the embedding quality.

However, the numerical simulations do suggest that standard, uniformly-spaced delay-coordinate maps tend to perform better than, and converge faster than, the non-uniform case. 
The theoretical results from Conjecture \ref{conj:Gen_Vand_Rank} detailing when a generalized Vandermonde matrix \eqref{eq:Generalized_Vandermonde_Matrix} has full column rank--at least with 1-3 distinct frequencies--agree with the numerical results: they show that more restrictive conditions are needed in addition to those for typical Fourier Vandermonde matrices \eqref{eq:Vandermonde_Matrix}.
Thus, that generalized Vandermonde matrices must satisfy more conditions than typical Vandermonde matrices indicates that uniformly-spaced delay-coordinate maps ought to perform better than their non-uniformly-spaced counterparts.
These insights lead to a number of further areas of investigation.
Deriving estimates for the rate of convergence would be a helpful metric for comparing uniformly-sampled and non-uniformly-sampled maps.
This also leads to an open question of whether uniform sampling is the guaranteed optimal sampling approach for linear systems that are not necessarily multi-scale.
Some future directions will be to examine stable linear embeddings using non-uniform time sampling for linear systems exhibiting frequencies across multiple timescales, and for linear systems observed using multiple observation functions as a means to embed multivariate time series.

From the viewpoint of applications, sensing strategies and signal processing techniques continue to be critically important for state-estimation and control in almost every domain of science and engineering.  
Leveraging the time-history of limited point sensors for reconstruction of full state spaces is clearly an important paradigm for practical deployment.
The non-uniform sampling of signals, especially in the context of emerging event-based sampling strategies, are also important for efficient data acquisition strategies.
The current work aims to establish rigorous estimates and bounds on when non-uniform sampling can guarantee faithful state-space estimation, which can then be used for critical downstream tasks such as control.

\section{Code and Data Availability}
The code used for the numerical experiments is available at: https://github.com/fisherng19/Stable-Linear-Takens-Embedding-Theorem-with-Unevenly-Spaced-Delays.git

\section{Acknowledgment}
We acknowledge support from the Air Force Office of Scientific Research  (FA9550-24-1-0141).

\bibliographystyle{IEEEtran}
\bibliography{references}

\begin{IEEEbiography}[{\includegraphics[width=1in,height=1.25in,clip,keepaspectratio]{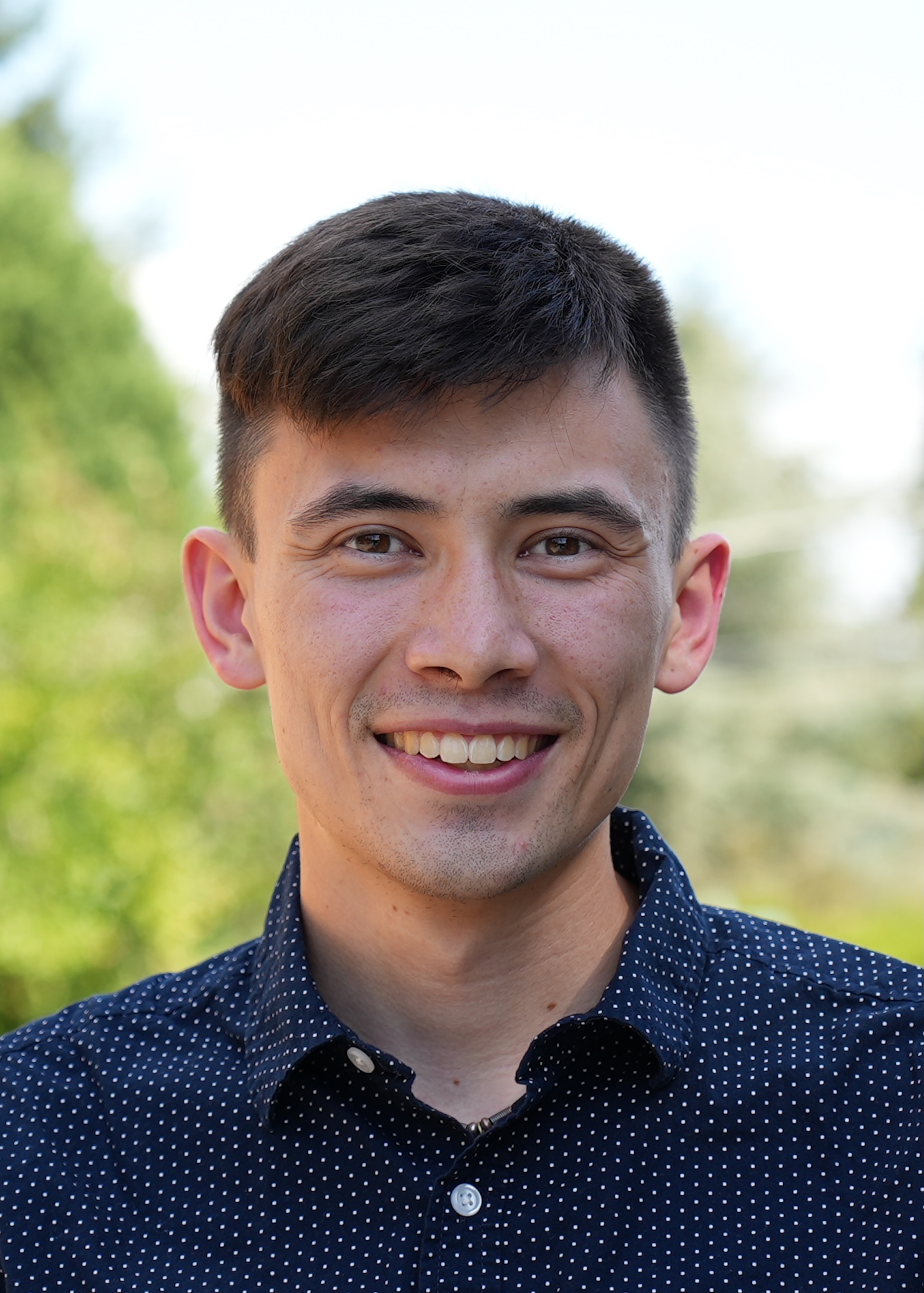}}]{Fisher Ng} is a Ph.D. candidate in the Department of Applied Mathematics at the University of Washington, Seattle, WA. He received the B.S. degree in applied mathematics and the B.S. degree in mechanical engineering from Gonzaga University, Spokane, WA, USA, in 2022, the M.S. degree in applied mathematics from the University of Washington, Seattle, WA, USA, in 2025, and is currently pursuing his Ph.D. degree at the University of Washington, Seattle, WA, USA. His research interests include time-delay embedding theory, dynamical systems, and multi-scale and reduced-order modeling.
\end{IEEEbiography}

\begin{IEEEbiography}[{\includegraphics[width=1in,height=1.25in,clip,keepaspectratio]{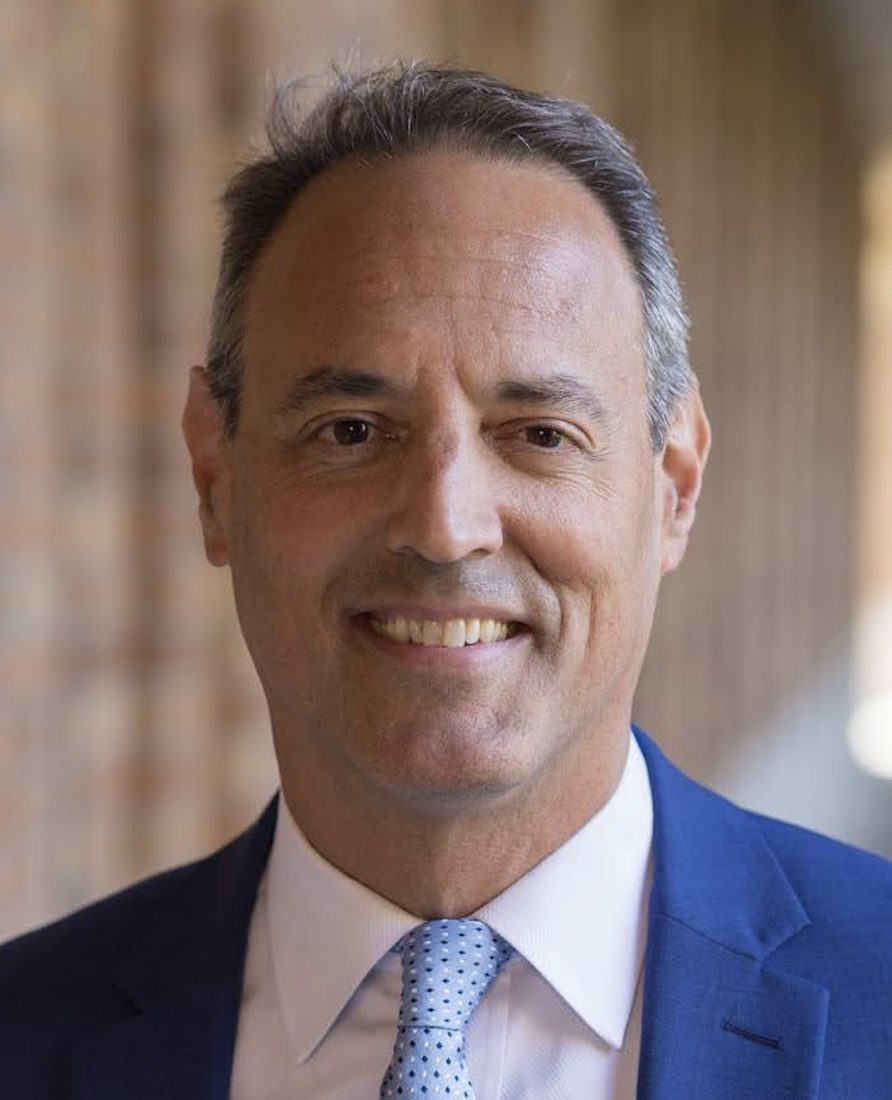}}]{J. Nathan Kutz}
is Director of Physics Informed AI at Autodesk Research in London, UK.  He is on-leave from the University of Washington where he was the Boeing Professor of Applied Mathematics and Electrical and Computer Engineering, having served as director of the AI Institute in Dynamic Systems from 2020-2025 and chair of applied mathematics from 2007-2015.  He received the B.S. degree in physics and mathematics from the University of Washington in 1990 and the Ph.D. in applied mathematics from Northwestern University in 1994. He was a postdoc in the applied and computational mathematics program at Princeton University before taking his faculty position. He joined Autodesk Research in January 2026.  He has a wide range of interests, including neuroscience to fluid dynamics where he integrates machine learning with dynamical systems and control.\end{IEEEbiography}

\end{document}